\newtheorem{theorem}{Theorem}[section]
\newtheorem*{theoremp}{Theorem}
\newtheorem{lemma}[theorem]{Lemma}
\newtheorem{corollary}[theorem]{Corollary}
\newtheorem*{problem}{Problem}
\newtheorem{definition}[theorem]{Definition}
\newcommand{\rr}{\mathds{R}}
\newcommand{\C}{\mathcal{C}}
\newcommand{\Z}{\mathbb{Z}}
\newcommand{\ff}{\mathcal{F}}
\newcommand{\h}{{\mathds H}}
\newcommand{\ttt}{{\mathcal T}}
\def\rr{\mathds{R}}
\DeclareMathOperator{\conv}{conv}
\DeclareMathOperator{\vol}{vol}
\DeclareMathOperator{\rank}{rank}
\DeclareMathOperator{\interior}{int}
\DeclareMathOperator{\surface}{surf}
\title{Quantitative $(p,q)$ theorems in combinatorial geometry}
\author{David Rolnick \and Pablo Sober\'on}
\begin{document}

\maketitle

\begin{abstract}
	We show quantitative versions of classic results in discrete geometry, where the size of a convex set is determined by some non-negative function.  We give versions of this kind for the selection theorem of B\'ar\'any, the existence of weak epsilon-nets for convex sets and the $(p,q)$ theorem of Alon and Kleitman.  These methods can be applied to functions such as the volume, surface area or number of points of a discrete set.  We also give general quantitative versions of the colorful Helly theorem for continuous functions.
\end{abstract}

\section{Introduction}

Helly's theorem is a central result regarding the intersection structure of convex sets.  It says that \textit{a finite family of convex sets in $\rr^d$ is intersecting if and only if every subfamily of cardinality $d+1$ is intersecting} \cite{Helly:1923wr}. Among the many generalizations and extensions of Helly's theorem (see, for instance,  \cite{Danzer:1963ug,Eckhoff:1993uy, Matousek:2002td, Wenger:2004uf, Amenta:2015tp}), a crowning achievement of combinatorial convexity is the proof of the $(p,q)$ theorem by Alon and Kleitman, answering positively a conjecture by Hadwiger and Debrunner \cite{Hadwiger:1957we}.

\begin{theoremp}[N. Alon and D. J. Kleitman \cite{Alon:1992ta}]
	Given integers $p \ge q \ge d+1$, there is a constant $c=c(p,q,d)$ such that the following statement holds.  For every finite family $\ff$ of non-empty convex sets in $\rr^d$ such that out of every $p$ elements of $\ff$ there are $q$ which are intersecting, there is a set $K$ of $c$ points that intersects every element in $\ff$.
\end{theoremp}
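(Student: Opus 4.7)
I would follow the original Alon--Kleitman strategy, which reduces the piercing problem to three ingredients: a \emph{fractional Helly property} extracted from the $(p,q)$ hypothesis, a \emph{fractional transversal} obtained via linear programming duality, and the \emph{weak $\varepsilon$-net theorem} for convex sets used to round it into a finite deterministic piercing set.

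The first step is a double counting argument. Given any $p$ members of $\ff$, the $(p,q)$ hypothesis furnishes $q$ of them with a common point; since $q \ge d+1$, each of the $\binom{q}{d+1}$ many $(d+1)$-subfamilies of those $q$ sets also shares that point. Averaging over all $p$-subfamilies of $\ff$ then shows that at least an $\alpha(p,q,d) > 0$ fraction of the $(d+1)$-subfamilies of $\ff$ are intersecting.

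Next I would invoke the fractional Helly theorem of Katchalski and Liu: from such an $\alpha$-fraction of intersecting $(d+1)$-tuples one extracts a single point lying in a constant fraction $\beta(\alpha,d)$ of $\ff$. Iterating this on the residual family --- which still inherits a (slightly weaker) fractional Helly property, since removing a bounded fraction of sets preserves a $(p,q)$-type bound --- or, equivalently, solving the dual LP of the piercing problem, produces a probability measure $\mu$ on $\rr^d$, supported on finitely many points, with $\mu(F) \ge \varepsilon(p,q,d)$ for every $F \in \ff$. This $\mu$ is the fractional transversal.

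The final step is to feed $\mu$ into the weak $\varepsilon$-net theorem for convex sets: there is a set $K \subset \rr^d$ of size $c(\varepsilon,d)$, and hence of size $c(p,q,d)$, meeting every convex body of $\mu$-measure at least $\varepsilon$. Since $\mu(F) \ge \varepsilon$ for every $F \in \ff$, the set $K$ pierces $\ff$. The main obstacle is precisely this last step. Unlike nets based on VC dimension, the existence of small weak $\varepsilon$-nets for convex sets is a nontrivial geometric theorem resting on centerpoint-style results and the first selection lemma of B\'ar\'any; controlling how $\varepsilon$ degrades through the fractional-Helly iteration while keeping $|K|$ bounded is the delicate quantitative issue that ultimately governs the function $c(p,q,d)$.
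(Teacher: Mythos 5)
Your plan is the Alon--Kleitman strategy, and it is the same architecture the paper adopts for its own quantitative $(p,q)$ theorem (Theorem \ref{theorem-general-quantitative-pq}): a $(p,q)$-to-fractional-Helly double count, an LP-duality bound on the fractional transversal number, and weak $\varepsilon$-nets to round to a finite piercing set. The ingredients and their order are correct. One part of your middle step, however, is not right as stated. Iterating the fractional Helly theorem on the residual family is \emph{not} equivalent to solving the dual LP. Each round of the iteration pierces only a $\beta$-fraction of the still-uncovered sets; after $k$ rounds $(1-\beta)^k|\ff|$ sets remain, so you need $k = \Theta(\log|\ff|)$ rounds, yielding a transversal of size $\Theta(\log|\ff|)$ and a fractional transversal whose minimum set-measure degrades with $|\ff|$, not a constant $\varepsilon(p,q,d)$. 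The argument that actually produces a constant (and the one the paper implements in Lemma \ref{lemma-second}) instead bounds the fractional \emph{matching} number $\nu^*$: take an optimal rational weight function realizing $\nu^*$, replicate each set according to its rescaled weight to form a multiset which still satisfies a $\bigl((p-1)(q-1)+1,\, q\bigr)$ condition, apply fractional Helly \emph{once} to find a point lying in a $\beta$-fraction of that multiset, and conclude $\nu^* \le 1/\beta$ because the packing constraint caps the total weight on any point at $1$. Then $\tau^* = \nu^*$ by LP duality, and your final step (Lemma \ref{lemma-first}) converts the bounded $\tau^*$ into a bounded $\tau$ via weak $\varepsilon$-nets. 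Keep the LP framing and drop the claimed equivalence with iteration.
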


The $(p,q)$ theorem has a rich history of variations and generalizations as well, described in the survey \cite{Eckhoff:2003ed}.  Newer results include a version for set systems of bounded $VC$-dimension \cite{Matousek:2004cs}, colorful and fractional versions \cite{Barany:2014bp} or versions with the intersections being restricted to be in certain curves \cite{Govindarajan:2015ir}.  It should be noted that even though there are plenty of results regarding existence of $(p,q)$ theorems, finding efficient bounds for $c(p,q,d)$ remains an extremely difficult problem.

Recently, there have been developments regarding versions of Helly's theorem which work if we additionally require the condition that the intersection of the convex sets is quantitatively large.  This could mean, for example, that it contains many points from a certain discrete subset $S$ of $\rr^d$ or that it has a large volume.

The first results of this kind were given by B\'ar\'any, Katchalski and Pach \cite{Barany:1982ga, Barany:1984ed}.  They proved that \textit{given a finite family $\ff$ of convex sets in $\rr^d$, if the intersection of every $2d$ of them has volume at least one, then $\vol (\cap \ff) \ge d^{-d^2}$.}  This has been recently improved by Nasz\'odi to conclude $\vol (\cap \ff) \ge d^{-cd}$ for some absolute constant $c>0$ \cite{Naszodi:2015vi}.  The result does not hold checking subfamilies of size $2d-1$.  Optimizing over the volume guaranteed in the conclusion instead of the size of the families needed to be checked, the following version of Helly's theorem was obtained.

\begin{theoremp}[J.A. De Loera et al. \cite{DeLoera:2015wp}]
There is a constant $H(\vol, d,\varepsilon)$ such that the following holds.  If $\ff$ is a finite family of convex sets in $\rr^d$ such that all subfamilies of size at most $H(\vol, d,\varepsilon)$ have an intersection of volume at least one, then $\vol (\cap \ff) \ge 1-\epsilon$.

Moreover, for any fixed $d$ we have that $H(\vol, d, \varepsilon) = \Theta (\varepsilon^{-(d-1)/2})$.
\end{theoremp}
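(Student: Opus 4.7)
The plan is to combine the Bárány-Katchalski-Pach (BKP) theorem with Bronstein's polytope approximation: every convex body $K \subset \rr^d$ of bounded aspect ratio admits a circumscribing polytope $P \supseteq K$ with $O_d(\varepsilon^{-(d-1)/2})$ facets and $\vol(P) - \vol(K) \le \varepsilon$, and the exponent is sharp for the Euclidean ball.

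For the upper bound $H(\vol, d, \varepsilon) = O(\varepsilon^{-(d-1)/2})$, I would first apply BKP (which needs $H \ge 2d$) to deduce that $K := \bigcap \ff$ is a convex body with volume bounded below by a constant depending only on $d$; after a volume-preserving affine normalization via John's ellipsoid, $K$ has bounded aspect ratio. Suppose for contradiction $\vol(K) < 1 - \varepsilon$. Invoking Bronstein gives a polytope $P \supseteq K$ with $N = O_d(\varepsilon^{-(d-1)/2})$ facets and $\vol(P) \le \vol(K) + \varepsilon < 1$. Each facet supports $K$ at some boundary point $x$ with outward normal $\nu$, and since $x \in \partial K$ there is $F \in \ff$ with $x \in \partial F$; selecting such an $F$ whose supporting half-space at $x$ lies inside the facet half-space of $P$ gives a subfamily $\ff'$ of size $\le N$ with $\bigcap \ff' \subseteq P$, so $\vol(\bigcap \ff') < 1$, contradicting the subfamily hypothesis.

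The hard step will be realizing each facet of $P$ by a single $F \in \ff$: the normal cone $N_K(x)$ is the convex cone generated by $\bigcup_{F : x \in \partial F} N_F(x)$, so the direction $\nu$ may fail to lie in any individual $N_F(x)$. The resolution is either to constrain Bronstein's approximation to use only directions that do lie in some $N_F(x)$ for $x \in \partial F \cap \partial K$ (showing via a smoothing or covering argument on $\partial K$ that such directions still admit an $O(\varepsilon^{-(d-1)/2})$-facet approximation), or to allow several sets per facet whose normals jointly span $\nu$, keeping the total count within the Bronstein bound.

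For the matching lower bound $\Omega(\varepsilon^{-(d-1)/2})$, I would take $\ff$ to be a dense finite family of tangent half-spaces to a Euclidean ball $B_r$ with $\vol(B_r) = 1 - 2\varepsilon$, chosen dense enough that $\vol(\bigcap \ff) < 1 - \varepsilon$. The classical lower bound in Bronstein's theorem, extremal for the ball, shows that any intersection of fewer than $c_d\, \varepsilon^{-(d-1)/2}$ such tangent half-spaces has volume at least $\vol(B_r) + 2\varepsilon = 1$; thus every small subfamily satisfies the hypothesis while the conclusion fails, giving $H(\vol, d, \varepsilon) \ge c_d\, \varepsilon^{-(d-1)/2}$.
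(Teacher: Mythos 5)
The paper does not prove this theorem; it cites it from De Loera, La Haye, Rolnick, and Sober\'on \cite{DeLoera:2015wp} and merely indicates the mechanism (approximation by circumscribed polyhedra implies Helly theorems, per \cite{Amenta:2015tp}). Your proposal follows exactly that mechanism, so you are on the right track. That said, the ``hard step'' you flag is a genuine gap, and neither of your two proposed fixes is quite right as stated.

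The clean resolution is a normal-cone Carath\'eodory argument, and it does \emph{not} keep you within the raw Bronstein facet count --- it costs a factor of $d$, which is harmless for $\Theta_d(\varepsilon^{-(d-1)/2})$. Concretely: push each facet hyperplane of $P$ inward until it supports $K = \bigcap \ff$ at some point $x_i$, with outward normal $\nu_i$. Since $\vol(K) > 0$ (by B\'ar\'any--Katchalski--Pach applied first, so the Slater condition holds), the normal cone of $K$ at $x_i$ satisfies $N_K(x_i) = \sum_F N_F(x_i)$; by Carath\'eodory for cones in $\rr^d$, one can write $\nu_i = \sum_{j=1}^{d}\nu_{i,j}$ with $\nu_{i,j}\in N_{F_{i,j}}(x_i)$ for at most $d$ sets $F_{i,j}\in\ff$. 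If $\langle y - x_i, \nu_{i,j}\rangle \le 0$ for each $j$ then $\langle y - x_i, \nu_i\rangle \le 0$, so $\bigcap_{j} F_{i,j}$ is contained in the facet halfspace $H_i$ of $P$. Taking all the $F_{i,j}$ over all facets gives a subfamily of size at most $dN = O_d(\varepsilon^{-(d-1)/2})$ whose intersection lies in $P$ and hence has volume less than one, the desired contradiction. Your proposed fix (a), restricting Bronstein's construction to ``realizable'' directions, does not obviously work because at a vertex of $K$ no single $F$ may realize the facet normal, and that failure occupies a full-dimensional cone of directions, not a negligible set; fix (b) is the right idea but you should not insist on matching the Bronstein count exactly. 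The John-ellipsoid normalization and the ball-tangent-halfspace lower bound are both fine as you describe.
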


 The quantity $H(\vol, d, \varepsilon)$ is closely related to results of approximation of convex sets by polytopes with few facets.  Due to the lower bounds in this theorem, it is impossible to obtain a result with $\vol(\cap \ff)\ge1$ in the conclusion, regardless of the size of the subfamilies we are willing to check.
 
For discrete functions over the convex sets, such as counting the number of integer points, there are also Helly-type results.  The first of this kind was presented by Doignon.

\begin{theoremp}[J.-P. Doignon \cite{Doignon:1973ht}]
	If $\ff$ is a finite family of convex sets in $\rr^d$ such that the intersection of every subfamily of size $2^d$ contains a point with integer coordinates, then the intersection of $\ff$ contains a point with integer coordinates.
\end{theoremp}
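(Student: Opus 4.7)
The plan is to argue by contradiction, combining a minimal counterexample with a pigeonhole on residues modulo $2$. Suppose $\bigcap \ff \cap \Z^d = \emptyset$; using finiteness of $\ff$, I pass to a subfamily $\ff_0 \subseteq \ff$ that is minimal (under inclusion) among subfamilies whose intersection avoids $\Z^d$. The goal is then to prove $|\ff_0| \le 2^d$, which exhibits the forbidden subfamily.

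By minimality of $\ff_0$, for every $F \in \ff_0$ the intersection $\bigcap(\ff_0 \setminus \{F\})$ contains a lattice point $z_F$, and this point necessarily satisfies $z_F \notin F$ (otherwise $z_F \in \bigcap \ff_0$). A short argument shows the $z_F$ are pairwise distinct, since $z_F = z_G$ for distinct $F,G$ would force the common value into every element of $\ff_0$. Now, assuming for contradiction that $|\ff_0| > 2^d$, the pigeonhole principle applied to the residues of the $z_F$ modulo $2$ produces distinct witnesses $z_F, z_G$ with $z_F \equiv z_G \pmod{2}$. Their midpoint $m := \tfrac{1}{2}(z_F + z_G)$ is then a lattice point, and for each $H \in \ff_0 \setminus \{F, G\}$ convexity of $H$ together with $z_F, z_G \in H$ yields $m \in H$.

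The main obstacle is the final step: forcing $m$ into $F \cap G$ as well, so that $m \in \bigcap \ff_0$ gives the desired contradiction. Convexity alone does not suffice, since $z_F \notin F$ while $z_G \in F$, and symmetrically for $G$. My plan is to refine the choice of witnesses so as to make this conclusion automatic --- for instance, by replacing each $F$ with a closed half-space $H_F \supseteq F$ strictly separating $F$ from $z_F$, and then choosing $z_F$ extremally (lexicographically smallest, or minimizing the signed distance to the bounding hyperplane of $H_F$) among lattice points in $\bigcap_{G \ne F} H_G$ that lie outside $H_F$. With such an extremal choice, the midpoint argument should either place $m$ inside every $H_F$ (which, after unwinding the separation data, produces a lattice point in $\bigcap \ff_0$) or produce a lattice point with the same witnessing property as $z_F$ but strictly more extremal, contradicting the choice. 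Setting up this bookkeeping carefully is the delicate part of the argument; once it goes through, one concludes $|\ff_0| \le 2^d$, and $\ff_0$ is itself the required subfamily of size at most $2^d$ whose intersection avoids $\Z^d$.
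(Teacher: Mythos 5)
The paper does not prove Doignon's theorem---it is invoked as a classical result and cited to Doignon (1973)---so there is no in-paper argument to compare against. Your proposal follows the structure of the standard proof due to Doignon and, independently, Bell and Scarf: minimal counterexample, witnesses $z_F$, distinctness of the witnesses, pigeonhole modulo~$2$, and the lattice midpoint $m$. All of that is set up correctly, and you are right that the crux is getting $m$ into $F$ and $G$.

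However, the plan you sketch for that final step has a real gap. Replacing each $F \in \ff_0$ by a closed halfspace $H_F \supseteq F$ that cuts off $z_F$ \emph{enlarges} every set, so $\bigcap_F H_F \supsetneq \bigcap_F F$. Producing a lattice point in $\bigcap_F H_F$ therefore does not contradict $\bigcap \ff_0 \cap \Z^d = \emptyset$, and ``unwinding the separation data'' is not a step that can be filled in automatically. Likewise, re-choosing $z_F$ from $\bigcap_{G\ne F} H_G \setminus H_F$ may produce a point that is not in $\bigcap_{G\ne F} G$, so it need not be a valid witness for the original family; and the halfspace $H_F$ itself depends on $z_F$, making the ``extremal re-choice'' circular as stated. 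There is also a concrete case your midpoint argument does not settle: convexity gives $m \in H$ for $H \ne F, G$, and lattice-freeness of $\bigcap\ff_0$ rules out $m\in F\cap G$, but it is entirely consistent that $m \notin F$ \emph{and} $m \notin G$. That case requires an extra idea. The standard remedy is to impose an extremal choice directly on the witnesses, for example choosing $z_F$ so that $\conv\{z_F : F\in\ff_0\}$ contains as few lattice points as possible, and to use the automatic fact that each $z_F$ is an extreme point of that hull (otherwise $z_F \in \conv\{z_G : G\ne F\} \subseteq F$, a contradiction); the lattice midpoint is then a fresh lattice point in the hull and, with some additional work on the two-sided exclusion case, a strictly smaller configuration is produced. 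Your proposal identifies the right obstacle but leaves the delicate argument as a plan whose specific mechanism (halfspace substitution plus lexicographic/extremal choice) does not, as written, close the gap.
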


This result was rediscovered several times \cite{Bell:1977tm, Scarf:1977va, Hoffman:1979ix}.  Doignon's theorem has a quantitative generalization \cite{Aliev:2014va}.  Aliev et. al. showed that there is a constant $c(k,d)$ such that \textit{for any finite family of convex sets in $\rr^d$ such that the intersection of every $c(k,d)$ of them contains at least $k$ points with integer coordinates, the intersection of the whole family contains at least $k$ points with integer coordinates}.

Recall that a set $S \subset \rr^d$ is said to be \textit{discrete} if every point $x \in \rr^d$ has a neighborhood such that $x$ is the only point of $S$ within it.  For any discrete set $S$, we can consider versions of Helly's theorem for $S$, and in particular can consider quantitative Helly numbers, defined as follows.

\begin{definition}
	For a discrete set $S \subset \rr^d$, we define the \emph{$k$-quantitative Helly number $\h_S(k)$} as the smallest number for which the following statement holds.  For any finite family $\ff$ of convex sets in $\rr^d$ such that for every subfamily $\ff'$ of cardinality $\h_S(k)$ or less we know that $\cap \ff'$ has at least $k$ points of $S$, then $\cap \ff$ contains at least $k$ points of $S$.  If no such number exists, we consider $\h_S(k) = \infty$.
\end{definition}

An example of a large family of sets $S$ with finite $\h_S(k)$ for all $k$ is the following.  Let $L$ be a lattice in $\rr^d$ and $L_1, L_2, \ldots, L_m$ be sublattices.  If $S = L \setminus (L_1 \cup \ldots \cup L_m)$, then $\h_S(k) \le (2^{m+1}k+1)^{\rank L}$ \cite[Thm. 1.9]{DeLoera:2015wp}.  For the case where $S$ is the integer lattice, better bounds are found in \cite{Aliev:2014va}.

The aim of this paper is to show a general quantitative version of the $(p,q)$ theorem - in particular, one that captures the behavior of continuous functions as well as discrete functions over the convex sets in $\rr^d$.  In Theorem \ref{theorem-general-quantitative-pq} we show a general set of conditions that a function $f:\mathcal{C}_d \to \rr^+ \cup \{\infty\}$ must satisfy in order to give a $(p,q)$ theorem, where $\mathcal{C}_d$ refers to the family of convex sets in $\rr^d$.  In particular, we prove that the functions $f(K) = \vol (K)$, the surface area, and $f(K) = |K \cap S|$ satisfy the conditions, where $S$ is any discrete set with finite Helly numbers.  Namely, we obtain the following corollaries.

\begin{corollary}[Quantitative discrete $(p,q)$ theorem]\label{theorem-quant-pq}
Let $S$ be a discrete subset of $\rr^d$ and $k$ be an integer such that $\h_S(k)$ is finite.  Let $p \ge q \ge \h_S(k)$ be positive integers.  Then, there is a constant $c= c(p,q,k,S)$ such that the following is true.  For any finite family $\ff$ of convex sets in $\rr^d$, each containing at least $k$ points of $S$ and such that for every $p$ sets in $\ff$ there are $q$ whose intersection contains $k$ points of $S$, we can find $c$ sets $K_1, K_2, \ldots, K_c$, each containing $k$ points of $S$, such that every $F \in \ff$ contains at least one $K_i$.
\end{corollary}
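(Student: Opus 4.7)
The plan is to deduce Corollary \ref{theorem-quant-pq} directly from the general quantitative $(p,q)$ theorem (Theorem \ref{theorem-general-quantitative-pq}) applied to the counting function $f:\C_d \to \rr^+ \cup \{\infty\}$ defined by $f(K) = |K \cap S|$, with threshold $t = k$. With this choice, the hypothesis of the corollary --- each set in $\ff$ has at least $k$ points of $S$, and out of every $p$ sets there are $q$ whose intersection contains $k$ points of $S$ --- is exactly the $(p,q)$-type property for the condition $f \ge k$, while the conclusion of Theorem \ref{theorem-general-quantitative-pq} will yield $c$ convex sets $K_1,\ldots,K_c$ with $f(K_i) \ge k$ such that every $F \in \ff$ contains some $K_i$.

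The first task is to verify the hypotheses of Theorem \ref{theorem-general-quantitative-pq} for this $f$. Monotonicity ($A \subset B \Rightarrow f(A) \le f(B)$) is immediate from set inclusion. The crucial input is the quantitative Helly condition: by the very definition of $\h_S(k)$, whenever every subfamily of size $\h_S(k)$ has intersection containing at least $k$ points of $S$, the whole intersection also does. Since we assumed $q \ge \h_S(k)$, the threshold-$k$ Helly number for $f$ is at most $q$, which is precisely the form needed by the general theorem.

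Any remaining regularity conditions --- typically some kind of semicontinuity along nested sequences of convex bodies, and finiteness on bounded sets --- should follow from discreteness of $S$. Indeed, every bounded convex set contains only finitely many points of $S$, so $f$ takes values in $\{0,1,2,\ldots\}\cup\{\infty\}$ and the relevant limiting arguments reduce to finite combinatorics: a point $s \in S$ lies in $\bigcap_n K_n$ for a decreasing sequence of closed convex sets iff $s$ lies in every $K_n$, so $f$ is well behaved under the shrinking operations that the general framework uses to pass from $\ff$ to the family of intersections with $f$-value $\ge k$.

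The main obstacle I anticipate is fitting a discrete, integer-valued function into a framework designed with continuous measures like volume and surface area in mind. One should check that whatever operations Theorem \ref{theorem-general-quantitative-pq} performs on the sets (e.g.\ replacing a convex body by a sub-body, taking minimal witnesses, passing to limits of polytopes) preserve the condition $f \ge k$ under the discreteness assumption on $S$. Once this compatibility is established, the bound $c(p,q,k,S)$ is inherited from the general constant in Theorem \ref{theorem-general-quantitative-pq}, with the parameter $\h_S(k)$ entering as the Helly threshold and $S$ appearing only through this number.
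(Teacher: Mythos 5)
Your high-level plan — apply Theorem \ref{theorem-general-quantitative-pq} to $f(K) = |K \cap S|$ with threshold $k$ — is exactly the paper's route, and your verifications of monotonicity and of the Helly condition (via $\h_S(k)$) are correct. However, you have glossed over the hypotheses of the general theorem that are actually nontrivial, and in doing so you've missed the main content of the deduction.

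Theorem \ref{theorem-general-quantitative-pq} requires three things beyond monotonicity: a Helly theorem, approximation by inscribed polytopes, and a \emph{fractional Helly theorem}, with the hypothesis on $p,q$ being $p \ge q \ge F(f,d,\varepsilon/2)$, not $p \ge q \ge H(f,d,\cdot)$. You characterize the non-Helly hypotheses as ``semicontinuity along nested sequences'' and ``finiteness on bounded sets,'' which is not what is needed. Inscribed-polytope approximation is indeed cheap here (take $C(f,d,0)=k$ by picking $k$ of the lattice points), but the fractional Helly theorem for $f(K) = |K\cap S|$ is a substantive claim and does not follow from discreteness of $S$ by any soft argument. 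The paper proves it via the floating-body machinery of Section~\ref{section-fractional-both}: one must adapt Lemma~\ref{lemma-fractional-general}, choosing a direction $v$ not orthogonal to any segment with endpoints in $S$ (possible since $S$ is countable) so that the ``level set'' $K_B$ in the sweeping argument is a set of exactly $k$ points of $S$, and then running the same counting argument. That lemma also yields the crucial quantitative link $F(f,d,0) \le \h_S(k)$, which is what lets you convert the corollary's hypothesis $q \ge \h_S(k)$ into the form $q \ge F(f,d,\varepsilon/2)$ required by the general theorem — a conversion you assume implicitly but never justify. Without this fractional Helly input, the greedy/LP-duality machinery inside the proof of Theorem~\ref{theorem-general-quantitative-pq} (specifically Lemma~\ref{lemma-second}) cannot bound the fractional packing number, and the argument does not close.
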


\begin{corollary}[Volumetric $(p,q)$ theorem]\label{theorem-vol-pq}
Given a positive integer $d$ and $1>\varepsilon > 0$, there is a constant $n(d, \varepsilon)$ such that the following holds.  Let $p \ge q \ge n(d, \varepsilon)$ be positive integers.  Then, there is a constant $c_{\vol}= c_{\vol}(p,q,d,\varepsilon)$ such that the following is true.  For any finite family $\ff$ of convex sets in $\rr^d$, each of volume at least one and such that for every $p$ sets in $\ff$ there are $q$ whose intersection has volume at least one, we can find $c_{\vol}$ convex sets $C_1, C_2, \ldots, C_{c_{\vol}}$, each of volume $1- \varepsilon$, such that every $F \in \ff$ contains at least one $C_i$.
\end{corollary}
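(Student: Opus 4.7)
The plan is to derive Corollary~\ref{theorem-vol-pq} as a direct application of the general quantitative $(p,q)$ theorem (Theorem~\ref{theorem-general-quantitative-pq}) to the size function $f(K) = \vol(K)$ on $\mathcal{C}_d$. The bulk of the argument is therefore the verification that volume satisfies the hypotheses of that theorem. Two of these are essentially immediate: $\vol$ is monotone under inclusion and takes values in $\rr^+ \cup \{\infty\}$. The remaining hypothesis---the quantitative Helly-type condition that will play the role of classical Helly in the Alon--Kleitman scheme---is exactly the content of the De~Loera et al.\ theorem quoted in the introduction, which supplies a constant $H(\vol, d, \varepsilon) = \Theta(\varepsilon^{-(d-1)/2})$ such that whenever every subfamily of size $H(\vol,d,\varepsilon)$ has intersection of volume at least one, the entire intersection has volume at least $1-\varepsilon$.

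Accordingly, I would set $n(d,\varepsilon) := H(\vol, d, \varepsilon)$, possibly replacing $\varepsilon$ by a smaller absolute multiple of itself if the general framework introduces an additional constant loss when converting a quantitative Helly input into a piercing output. For $p \ge q \ge n(d,\varepsilon)$, the $(p,q)$-hypothesis on $\ff$ combined with the De~Loera et al.\ bound yields a plentiful supply of convex ``fat points'': namely, intersections of $n(d,\varepsilon)$-element subfamilies of $\ff$, each of which is guaranteed to be a convex set of volume at least $1-\varepsilon$. Feeding these into Theorem~\ref{theorem-general-quantitative-pq} produces the desired bounded collection $C_1, \ldots, C_{c_{\vol}}$, with each $C_i$ a convex set of volume at least $1-\varepsilon$ and with every $F \in \ff$ containing at least one $C_i$.

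The main conceptual difficulty lies in Theorem~\ref{theorem-general-quantitative-pq} itself, which presumably uses the fractional Helly plus weak $\varepsilon$-net machinery adapted to general functions $f$ on $\mathcal{C}_d$; for the corollary, the only substantive task is to calibrate the parameter in the quantitative Helly input so that the piercing bodies in the conclusion have volume at least the prescribed $1-\varepsilon$, while keeping $c_{\vol}$ finite. This is routine parameter bookkeeping rather than a structural obstacle, and once the hypotheses of the general theorem have been verified for $\vol$, the statement of Corollary~\ref{theorem-vol-pq} follows immediately from its conclusion.
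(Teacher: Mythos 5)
Your plan to instantiate Theorem~\ref{theorem-general-quantitative-pq} with $f = \vol$ is the right starting point, but the proposal has a genuine gap: you treat ``fractional Helly'' as internal machinery of the general theorem, when in fact \emph{admitting a fractional Helly theorem} is one of the hypotheses of Theorem~\ref{theorem-general-quantitative-pq} that must be verified for $\vol$. The paper warns explicitly that a Helly theorem (the De Loera et al.\ result you invoke) does \emph{not} automatically imply a fractional Helly theorem, and that Kalai's collapsibility/Leray machinery does not apply to the volume nerve. Establishing that $\vol$ admits a fractional Helly theorem is the substantive content of Section~\ref{section-fractional-both}: one has to introduce the floating body $K(\vol,\varepsilon)$, show that a $v$-halfspace containing a volume-$(1-\varepsilon)$ slab of an $(h-1)$-wise intersection is forced into any $h$-th set, and run a counting argument to produce a set of near-unit volume lying inside a $\beta$-fraction of $\ff$ (Lemma~\ref{lemma-fractional-general} plus Corollary~\ref{lemma-fractional-volume}). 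None of this appears in your argument.

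This also leads to a concrete error in the parameter. Theorem~\ref{theorem-general-quantitative-pq} requires $p \ge q \ge F(f,d,\tfrac{\varepsilon}{2})$, so $n(d,\varepsilon)$ must be taken on the order of the \emph{fractional} Helly number $F(\vol,d,\tfrac{\varepsilon}{2}) = O\bigl(\varepsilon^{-(d^2-1)/4}\bigr)$, not the Helly number $H(\vol,d,\varepsilon) = \Theta\bigl(\varepsilon^{-(d-1)/2}\bigr)$ as you propose; for $d \ge 2$ these are genuinely different rates, and your ``replace $\varepsilon$ by a constant multiple'' remark cannot bridge that gap. Finally, you omit verification of the inscribed-polytope approximation hypothesis (Definition~\ref{definition-caratheodory}) for $\vol$, which the paper discharges via the Gordon--Meyer--Reisner bounds giving $C(\vol,d,\varepsilon) = \bigl(cd/\varepsilon\bigr)^{(d-1)/2}$. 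The proposal is therefore not a complete proof until the fractional Helly theorem for volume and the inscribed-polytope condition are actually established, and the threshold $n(d,\varepsilon)$ is corrected.
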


\begin{corollary}[Surface area $(p,q)$ theorem]\label{theorem-surface}
Given a convex set $K \subset \rr^d$, let $\surface (K)$ be its surface area.  Given a positive integer $d$ and $1>\varepsilon > 0$, there is a constant $n_2(d, \varepsilon)$ such that the following holds.  Let $p \ge q \ge n_2(d, \varepsilon)$ be positive integers.  Then, there is a constant $c_{\surface}= c_{\surface}(p,q,d,\varepsilon)$ such that the following is true.  For any finite family $\ff$ of convex sets in $\rr^d$, each of surface area at least one and such that for every $p$ sets in $\ff$ there are $q$ whose intersection has surface area at least one, we can find $c_{\surface}$ convex sets $C_1, C_2, \ldots, C_{c_{\surface}}$, each of surface area $1- \varepsilon$, such that every $F \in \ff$ contains at least one $C_i$.
\end{corollary}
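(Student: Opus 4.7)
The plan is to obtain Corollary \ref{theorem-surface} as a direct application of Theorem \ref{theorem-general-quantitative-pq} to the function $f(K) = \surface(K)$. Since the authors have already isolated a sufficient list of axioms on $f:\mathcal{C}_d \to \rr^+\cup\{\infty\}$ that guarantee a quantitative $(p,q)$ statement, the task reduces to verifying each of those axioms for surface area, with $n_2(d,\varepsilon)$ and $c_{\surface}(p,q,d,\varepsilon)$ inherited from the general theorem.

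The anticipated axioms divide into easy and hard cases. On the easy side, monotonicity of $\surface$ under inclusion of convex bodies is classical: if $A \subseteq B$ are convex sets in $\rr^d$, the nearest-point projection $\pi_A:\partial B \to \partial A$ is 1-Lipschitz, hence $\surface(A) \le \surface(B)$. Continuity of $\surface$ on decreasing nested intersections, and the routine structural properties such as $\surface(K)\ge 1$ forcing $K$ to contain a convex set of surface area $1-\varepsilon$ arbitrarily close to $K$ in the Hausdorff metric, all follow from standard convex-geometric continuity of mixed volumes. These properties should line up directly with the corresponding checks the authors do for $\vol$.

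The main obstacle will be the quantitative Helly-type ingredient for $\surface$, i.e. a number $H(\surface,d,\varepsilon)$ such that whenever every subfamily of $\ff$ of size at most $H(\surface,d,\varepsilon)$ has intersection of surface area at least one, then $\surface(\cap\ff) \ge 1-\varepsilon$. The strategy here mimics the argument of De Loera et al.\ for $\vol$: approximate $\cap \ff$ externally by an intersection of a bounded (in terms of $d$ and $\varepsilon$) number of supporting half-spaces whose resulting polytope has surface area within a factor $1-\varepsilon$ of $\surface(\cap \ff)$, which is possible by standard polytopal approximation bounds for convex bodies; then replace each supporting half-space by a subfamily of $\ff$ of size at most $d+1$ whose intersection sits arbitrarily close to that half-space near the relevant facet, using Helly's theorem locally. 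Putting these subfamilies together yields a single subfamily of bounded size whose intersection already has surface area at least $1-\varepsilon$, which is exactly the required Helly-type bound.

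Once this quantitative Helly statement is in hand, setting $n_2(d,\varepsilon) := H(\surface,d,\varepsilon)$ and invoking Theorem \ref{theorem-general-quantitative-pq} with $f = \surface$ produces a finite set of convex pieces $C_1,\ldots,C_{c_{\surface}}$, each of surface area at least $1-\varepsilon$, such that every $F\in\ff$ contains some $C_i$. The constant $c_{\surface}(p,q,d,\varepsilon)$ comes from the general theorem applied with this choice of $n_2$. The chief novelty and difficulty lie entirely in the surface-area Helly estimate; the rest is a bookkeeping translation through the axiomatic framework of Theorem \ref{theorem-general-quantitative-pq}.
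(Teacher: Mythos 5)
Your high-level plan—verify the hypotheses of Theorem \ref{theorem-general-quantitative-pq} for $f = \surface$—is the right one, and the easy observations (monotonicity via the 1-Lipschitz nearest-point projection, continuity in the Hausdorff topology) are fine. But your proposal misses the ingredient that the paper treats as the genuinely hard one, and consequently would not actually apply the theorem correctly.

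Theorem \ref{theorem-general-quantitative-pq} requires \emph{four} separate properties of $f$: monotonicity, a (quantitative) Helly theorem, approximability by inscribed polytopes, \emph{and a fractional Helly theorem}. You identify the quantitative Helly statement for $\surface$ as ``the main obstacle'' and stop there. But the paper explicitly emphasizes that a Helly theorem does not automatically give a fractional Helly theorem, nor vice-versa, and the nerve-complex machinery of Kalai is not available here because the relevant complex is not known to be $d'$-collapsible or $d'$-Leray. The fractional Helly theorem for $\surface$ is exactly what the paper has to work to establish, and it does so through the floating-body machinery of Section \ref{section-fractional-both}: it defines a floating body $K(f,\varepsilon)$, proves its existence for any continuous, strictly monotone, homogeneous, well-defined function, and then shows (Lemma \ref{lemma-fractional-general}) that the Helly theorem plus a floating body imply the fractional Helly theorem. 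Corollary \ref{lemma-fractional-surface} then just verifies that $\surface$ is strictly monotone (homogeneity and well-definedness being obvious), which is the only non-routine step for surface area. None of this appears in your sketch, and its absence is fatal: without the fractional Helly number $F(\surface,d,\varepsilon)$ you cannot invoke Theorem \ref{theorem-general-quantitative-pq} at all, since the hypothesis there is $p\ge q\ge F(f,d,\tfrac{\varepsilon}{2})$, not $p\ge q\ge H(f,d,\varepsilon)$. Your proposed definition $n_2(d,\varepsilon):=H(\surface,d,\varepsilon)$ therefore references the wrong constant.

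A secondary remark: your sketch for the Helly number $H(\surface,d,\varepsilon)$ (replacing circumscribing halfspaces by small subfamilies and ``using Helly's theorem locally'') is not what the paper does and is not obviously correct as stated; the paper instead relies on a compactness argument, which gives no explicit bound for surface area. The inscribed-polytope approximation $C(\surface,d,\varepsilon)$ also needs to be stated as its own condition rather than folded into ``routine structural properties,'' though the same compactness argument handles it. The short version of the gap: you have the scaffolding right but omitted the fractional Helly theorem and the floating-body construction, which are the core of the paper's contribution for continuous functionals like surface area.
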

For every fixed $d$, the quantity $n(d,\varepsilon)$ of Corollary \ref{theorem-vol-pq} is $O \left(\varepsilon^{-(d^2-1)/4}\right)$.  For $d=1$, it is a simple exercise to show that the classic $(p,q)$ theorem implies Corollary \ref{theorem-vol-pq}, with $\vol(C_i) = 1$ for all $i$.  For the quantity $n_2(d,\varepsilon)$ in Corollary \ref{theorem-surface}, we do not have explicit asymptotic bounds.

The case $k=1$, $S=\mathbb{Z}^d$ of Corollary \ref{theorem-quant-pq} has already been proven \cite{Alon:1995fs}.  Moreover, a surprising result is an improvement by B\'ar\'any and Matou\v{s}ek \cite{Barany:2003wg}.  They showed that the condition needed on $p,q$ is only $p \ge q \ge d+1$, while $\h_{\mathbb{Z}^d}(1) = 2^d$.  However, it seems that one of their key ingredients \cite[Corollary 2.2]{Barany:2003wg} fails when $k \ge 2$.  The fractional Helly theorems of Averkov and Weismantel \cite{Averkov:2010tv} and the methods below show that we can relax the condition to $p \ge q \ge d+1$ if $k =1$ as long as $\h_S(1)$ is finite.  Moreover, if $k=1$, the condition of $S$ being discrete can be replaced by asking $S$ to be closed. 

A key step for Corollaries \ref{theorem-vol-pq} and \ref{theorem-surface} is the existence of a ``floating body''.  These floating bodies are extensions for arbitrary functions of the definition introduced in \cite{Schutt:1990tj}, where they are developed for the volume.  This concept allows us to prove general forms of the fractional Helly theorem.  It should be pointed out that these methods do not work for all functions, in particular the diameter does not satisfy the required conditions.  The floating bodies we discuss also allow us to prove general quantitative versions of the colorful Helly theorem by Lov\'asz, whose proof appeared in a paper by B\'ar\'any \cite{Barany:1982va}.  In particular, they work for the surface area and volume.  The case of the volume was already known \cite[Thm. 1.6]{DeLoera:2015wp}, with an essentially different proof.

The paper is organized as follows.  In section \ref{section-strong} we state the general $(p,q)$ theorem, the conditions needed for it and prove the result.  In section \ref{section-fractional-both} we give exhibit a class of functions that satisfies the conditions stated in section \ref{section-strong}.  This includes volume, the surface area and the number of points from a discrete set.  Using the same methods of section \ref{section-fractional-both}, we provide a general set of properties needed to prove quantitative version of the colorful Helly theorem in section \ref{section-colorful-volumetric}.  In section \ref{section-remarks} we include some remarks and open problems.

\section{General quantitative $(p,q)$ theorem}\label{section-strong}

Let $\rr^+ = \{a \in \rr: a \ge 0\}$ and $\mathcal{C}_d$ be the family of convex sets in $\rr^d$.  We wish to determine which functions $f:\C_d \to \rr^+ \cup \{\infty\}$ allow for a quantitative $(p,q)$ theorem.  We say that a function $f:\C_d \to \rr^+ \cup \{\infty\}$ is monotone if $A \subset B$ implies $f(A) \le f(B)$.  During the rest of the paper we assume that the functions we work with are monotone.
The main three ingredients we need are that $f$ can be approximated by circumscribed polyhedra, that $f$ can be approximated by inscribed polytopes and a fractional Helly theorem. We define these three porperties below.  The first two conditions are directly related to the approximation of convex sets by polytopes, we recommend the surveys \cite{Bronstein:2008dn, Gruber:1993gj} for this subject.

Regarding the existence of approximations by circumscribed polyhedra, we can state a condition in terms of Helly-type theorems.

\begin{definition}\label{definition-helly}
	We say $f:\C_d \to \rr^+ \cup \{\infty\}$ \emph{admits a Helly theorem} if given $1>\varepsilon>0$ there is an integer $H(f,d,\varepsilon)$ such that for any finite family $\ff$ and $\lambda >0$, if every subfamily $\ff'$ of cardinality at most $H(f,d,\varepsilon)$ satifies $f(\cap \ff') \ge \lambda$, then $f(\cap \ff) \ge (1-\varepsilon)\lambda$.
\end{definition}

It was shown in \cite{Amenta:2015tp} that those functions which allow for approximations by circumscribed polyhedra always admit a Helly theorem.  Namely, it is sufficient to know that for every convex set $K$ with $f(K)<\infty$, there is a polyhedron $P$ such that
\begin{itemize}
	\item $K \subset P$,
	\item $P$ has a bounded number of facets in terms of $d, \varepsilon$, and
	\item $f(K) \ge (1-\varepsilon)f(P)$.
\end{itemize}

If $f$ is the indicator of the property \textit{``being non-empty''} in Definition \ref{definition-helly}, we recreate Helly's original result.  The results needed for the volumetric version are stated precisely in \cite{Anonymous:HS2Q-kvJ}, which give approximation estimates based on the Nikodym metric.  For the surface area, it suffices to use a compactness argument similar to that we will use in section \ref{section-fractional-both}.  For the indicator of \textit{``containing a set of $k$ points from a set $S$''}, the definition of $\h_S(k)$ is precisely the Helly theorem (with $\varepsilon = 0$).  Another equivalent formulation for this property is with the function $f(\cdot) = |S \cap \cdot|$ and the hidden constant being $k$, and $\varepsilon =0$.  In addition to the work in \cite{DeLoera:2015wp}, previous quantitative Helly-type results for continuous functions existed with contraction and expansion of convex sets \cite{Langberg:2009go}.

The next ingredient is that $f$ can be approximated by inscribed polytopes.

\begin{definition}\label{definition-caratheodory}
	We say $f:\C_d \to \rr^+\cup \{\infty\}$ \emph{can be approximated by inscribed polytopes} if for every $1> \varepsilon > 0$ there is a constant $C(f,d,\varepsilon)$ such that for every $\infty > \lambda \ge 0$ and every convex set $K \subset \rr^d$ with $f(K) \ge \lambda$, there is a polytope $P \subset \rr^d$ such that
	\begin{itemize}
		\item $P \subset K$,
		\item $P$ has at most $C(f,d,\varepsilon)$ vertices, and
		\item $f(P) \ge (1-\varepsilon) \lambda$.
	\end{itemize}
\end{definition}

For example, very precise estimates exist for $C(\vol, d, \varepsilon)$ \cite{Gordon:1995kn, Gordon:1997wi}, giving $\left(\frac{c d}{\varepsilon}\right)^{(d-1)/2}$ for some absolute constants $c$ for an upper and lower bound.  For the indicator of \textit{``having at least $k$ points of $S$''}, one gets trivially $C(f,d,0) = k$.  For the surface area, again a compactness argument as in section \ref{section-fractional-both} is sufficient, although it does not yield explicit bounds.

The methods in \cite{DeLoera:2015wp} also show that this kind of approximation is closely linked to the existence of colorful Carath\'eodory theorems, as in \cite{Barany:1982va}.  This comes as no surprise since Carath\'eodory's theorem is the natural dual of Helly's theorem, and these approximations are duals to the ones needed for Helly-type theorems.  We will be using the approximations as above in combination with the following result.

\begin{theoremp}[Quantitative colorful Carath\'eodory theorem; De Loera et al. \cite{DeLoera:2015wp}]
	Let $S$ be a set of $k$ points in $\rr^d$, and $n = \max\{kd, d+1\}$.  Then, for any $n$ sets $X_1, \ldots, X_n$ such that $S \subset \conv (X_i)$ for all $i$, there is a choice of points $x_1 \in X_1, \ldots, x_n \in X_n$ such that $S \subset \conv\{x_1, \ldots, x_n\}$.
\end{theoremp}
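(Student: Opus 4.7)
The plan is to reduce to the classical colorful Carath\'eodory theorem of B\'ar\'any. When $k=1$ we have $n=d+1$, and the statement is exactly B\'ar\'any's theorem, so assume $k\ge 2$ and hence $n=kd$. First I would pass to a finite setting: for each pair $(i,j)$, the ordinary Carath\'eodory theorem produces at most $d+1$ points of $X_i$ whose convex hull contains $s_j$; replacing each $X_i$ by the union over $j$ of these points (so $|X_i|\le k(d+1)$) preserves $S\subset\operatorname{conv}(X_i)$ and makes $\mathcal T=X_1\times\cdots\times X_n$ finite.

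On this finite product I would set up a minimization. Define the deficiency
\[
\Phi(x_1,\ldots,x_n)=\sum_{j=1}^{k}\operatorname{dist}\bigl(s_j,\operatorname{conv}\{x_1,\ldots,x_n\}\bigr)^2,
\]
and choose a minimizer $T^{*}=(x_1^{*},\ldots,x_n^{*})\in\mathcal T$. The conclusion is equivalent to $\Phi(T^{*})=0$. Suppose for contradiction that $\Phi(T^{*})>0$: some $s_{j_0}$ lies at positive distance from $\operatorname{conv}(T^{*})$, so a hyperplane $H$ strictly separates the two. Since $s_{j_0}\in\operatorname{conv}(X_i)$ for every $i$, each color class $X_i$ contains a point $x_i'$ strictly on the $s_{j_0}$-side of $H$. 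The contradiction will come from exhibiting a swap $T^{*}\to T'$, replacing one or more coordinates of $T^{*}$ by the corresponding $x_i'$, that strictly decreases $\Phi$ and so contradicts the minimality of $T^{*}$.

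The main obstacle is this swap argument: a naive single-coordinate swap that enlarges $\operatorname{conv}(T^{*})$ toward $s_{j_0}$ may expel some other $s_{j'}$ from the hull, cancelling or reversing the gain. To handle this I would combine a counting argument with the classical colorful Carath\'eodory theorem itself. Each $s_{j'}\in\operatorname{conv}(T^{*})$ is supported by at most $d+1$ coordinates (again by Carath\'eodory), giving a total support budget of at most $(k-1)(d+1)$ among the $n=kd$ coordinates; this leaves $kd-(k-1)(d+1)=d-k+1$ provably unused coordinates when $k\le d$, any of which can safely be swapped. In the harder regime $k>d$ the count no longer yields a free coordinate; one instead invokes the classical colorful Carath\'eodory on a carefully chosen block of $d+1$ coordinates to produce a replacement colorful subselection whose convex hull contains $s_{j_0}$ on the $s_{j_0}$-side of $H$, and argues via an inductive hypothesis on $k$ (with base case $k=1$ supplied by the classical theorem) that the remaining unchanged coordinates continue to cover the other $s_{j'}$. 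Orchestrating this block swap so that it both strictly lowers $\Phi$ and avoids ejecting any $s_{j'}$ is the central technical step.
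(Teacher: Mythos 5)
The paper does not actually prove this theorem; it is quoted directly from De Loera et al.\ \cite{DeLoera:2015wp}, so there is no internal proof to compare against.  Evaluated on its own, your proposal is a reasonable sketch in spirit (the finite reduction and the variational setup are both natural), but it has a real gap already in the regime you call ``easy.''

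In the $k\le d$ case you budget $(k-1)(d+1)$ coordinates for the supports of the $s_{j'}$ with $j'\ne j_0$ and conclude that one of the remaining $d-k+1$ coordinates ``can safely be swapped.''  But swapping such a coordinate $i_0$ only guarantees that the other $s_{j'}$ remain in the new hull; it does \emph{not} guarantee that the distance to $s_{j_0}$ decreases.  To get strict decrease along the lines of B\'ar\'any's argument you must also preserve the nearest point $p^{*}$ of $\conv(T^{*})$ to $s_{j_0}$ (otherwise removing $x_{i_0}^{*}$ may shrink the hull and move it \emph{away} from $s_{j_0}$, with $x_{i_0}'$ failing to compensate).  Keeping $p^{*}$ costs up to $d$ additional coordinates, so the true budget is at least $(k-1)(d+1)+d = kd + k - 1$, which exceeds $n = kd$ as soon as $k\ge 2$.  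Hence the counting never produces a coordinate that is simultaneously free for the other $s_{j'}$ and for $p^{*}$, and your swap step is not justified even for $k\le d$.  For $k>d$ you explicitly leave the argument open (``orchestrating this block swap \ldots\ is the central technical step''), so that case is unfinished as well.  As written, the proposal does not constitute a proof; it identifies the right obstacle but does not overcome it, and a correct argument needs a different mechanism than a single-coordinate greedy swap on the potential $\Phi$.
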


The colorful Carath\'eodory theorem comes when we use the theorem above with the polytope from Definition \ref{definition-caratheodory} when $f(\cdot)$ is the indicator of the property \textit{``contains the origin''} (note that $C(f,d,0) = 1$ in this case).  This has also been studied for instance for $f(K) = \max \{\lambda \ge 0 : B_{\vec{0}}(\lambda) \subset K\}$, where $B_{\vec{0}}(\lambda)$ is the Euclidean ball of radius $\lambda$ centered at the origin \cite{Kirkpatrick:1992ex, DeLoera:2015wp}.  These are often referred to as quantitative Steinitz results, which were also pioneered by B\'ar\'any, Kathalski and Pach \cite{Barany:1982ga}.  If one changes the unit ball by an arbitrary centrally symmetric set, approximations reducing the Banach-Mazur distance, such as \cite{Barvinok:2014fx}, come into play.

The final ingredient is a fractional Helly theorem.

\begin{definition}
	We say $f:\C_d \to \rr^+\cup \{\infty\}$ \emph{admits a fractional Helly theorem} if, given $1>\varepsilon>0$, there is an integer $F=F(f,d,\varepsilon)$ such that for any $\alpha>0$ there is a $\beta = \beta (\alpha, f, d, \varepsilon) >0$ such that the following holds.  Given any finite family $\ff$ of convex sets in $\rr^d$, if there are at least $\alpha {{|\ff|}\choose{F}}$ subsets $A \subset \ff$ of cardinality $F$ that satisfy $f(\cap A) \ge 1$, there is a subset $\ff' \subset \ff$ of cardinality at least $\beta |\ff|$ such that $f(\cap \ff') \ge 1-\varepsilon$.
\end{definition}

We should emphasize that it is not immediate that a Helly theorem implies a fractional Helly theorem, or vice-versa.  In particular, for the volumetric version of the $(p,q)$ theorem we present, the fractional Helly we prove has a larger bound for $F(\vol, d, \varepsilon)$ than $H(\vol, d, \varepsilon)$.

Given a finite family of convex sets in $\rr^d$ of volume at least one, if we make a simplicial complex by taking a vertex for each set and a face for each subfamily whose intersection has volume at least one, there is no indication that the resulting complex is $d'$-collapsible or even $d'$-Leray for some $d'$ depending on $d$.  In other words, the machinery developed by Gil Kalai to obtain fractional Helly results cannot be applied directly \cite{Kalai:1984isa, Kalai:1986ho}.  For a deeper discussion of the relation of topological properties of nerve complexes and Helly theorems, we recommend \cite{Tancer:2013iz}.

The most compelling cases are those for which $F(f,d,\varepsilon) \neq H(f,d,\varepsilon)$.  One instance where this can be observed is when $f$ is the indicator of the function \textit{``contains a point with integer coordinates''}, where B\'ar\'any and Matou\v{s}ek showed that the fractional Helly number is $d+1$, while the Helly number is $2^d$ \cite{Barany:2003wg}.  Other examples of this phenomenon include fractional Helly numbers for the property of \textit{``having a hyperplane transversal''} \cite{Alon:1995fs}, or fractional Helly numbers for set systems with bounded $VC$-dimension \cite{Matousek:2004cs}.  In both cases there is no Helly theorem but there is a fractional version.  We show a general way to obtain fractional Helly theorems, but the restrictions for the function increase.

For each of the definitions mentioned, we say $f$ satisfies a \textit{sharp} version of the result if it holds for $\varepsilon = 0$.  For instance, Doignon's theorem can be restated as $H(|\cdot \cap \Z^d|, d, 0) = 2^d$, with the hidden $\lambda$ being one.  Note that given a discrete set $S$, the function $f(K) = |S\cap K|$ and $\lambda = k$ would give the quantitative results for \textit{``having at least $k$ points of $S$''}.  This also sheds some light on why Helly-type results for functions that vary continuously such as the volume or diameter have a necessary loss $\varepsilon$ while discrete functions, such as $f(K) = |S\cap K|$ do not.  For $f(K) = |S\cap K|$, it suffices for $\varepsilon < \frac{1}{k}$ to get that $f(K) \ge (1- \varepsilon)k$ implies $f(K) \ge k$, so $n(f,d,\varepsilon) = n(f,d,0)$, so small values of $\varepsilon$ are of no consequence.

\begin{theorem}[Quantitative $(p,q)$ theorem]\label{theorem-general-quantitative-pq}
	Let $f:\C_d \to \rr^+\cup \{\infty\}$ be a monotone function that admits a Helly theorem, approximations by inscribed polytopes and a fractional Helly theorem.  Then, given $p \ge q \ge F(f,d,\frac{\varepsilon}{2})$, there is a constant $c=c(f,p,q,d,\varepsilon)$ such that for any finite family $\ff$ of convex sets in $\rr^d$, if out of every $p$ elements of $\ff$ there is a $q$-tuple $A$ such that $f(\cap A) \ge 1$, then there is a family $\ttt$ of at most $c$ convex sets $K_1, K_2, \ldots, K_c$ such that $f(K_i) \ge 1- \varepsilon$ for each $i$ and every set in $\ff$ contains at least one set in $\ttt$.
	
	Moreover, if $f$ admits sharp versions of the results mentioned, then we may take $\varepsilon=0$ in this theorem as well.
\end{theorem}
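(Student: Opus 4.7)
My plan is to follow the strategy of Alon and Kleitman's original proof of the $(p,q)$ theorem, using the three hypotheses on $f$ in place of the role of nonempty intersection in their argument. The outline has three stages: (i) build a small weak $\varepsilon$-net of inscribed polytopes for the family $\ff$; (ii) use LP duality and the $(p,q)$-condition to produce a bounded-size fractional transversal by elements of this net; (iii) round to an integer transversal of bounded size.

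For stage (i), I would take a random sample of $q$-tuples from $\ff$ and, for each sampled $q$-tuple $A$ with $f(\cap A) \geq 1$, apply Definition \ref{definition-caratheodory} at parameter $\varepsilon/2$ to obtain an inscribed polytope $P_A \subseteq \cap A$ with at most $C(f,d,\varepsilon/2)$ vertices and $f(P_A) \geq 1-\varepsilon/2$. The fractional Helly hypothesis at parameter $\varepsilon/2$, which is exactly the reason for the assumption $q \geq F(f,d,\varepsilon/2)$, guarantees that, for a suitable sample size depending on $f,d,\varepsilon$, the resulting collection $\mathcal{N}$ is a weak $\varepsilon$-net in the following sense: every subfamily $\ff' \subseteq \ff$ of density at least some $\delta = \delta(f,d,\varepsilon) > 0$ with $f(\cap \ff') \geq 1-\varepsilon/2$ contains, in the intersection of its members, at least one polytope of $\mathcal{N}$; each polytope of $\mathcal{N}$ also has $f$-value at least $(1-\varepsilon/2)^2 \geq 1-\varepsilon$.

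For stage (ii), I would set up the linear program with variables $y_P \geq 0$ for $P \in \mathcal{N}$, constraints $\sum_{P \subseteq F} y_P \geq 1$ for every $F \in \ff$, and objective $\min \sum_P y_P$. By LP duality, the optimum equals the maximum of $\sum_F w(F)$ over $w:\ff \to \rr^+$ with $\sum_{F \supseteq P} w(F) \leq 1$ for every $P \in \mathcal{N}$. Given any feasible $w$, a weighted double-counting that uses the $(p,q)$-condition produces many $w$-heavy $q$-tuples $A$ with $f(\cap A) \geq 1$; the weighted form of the fractional Helly hypothesis then yields a $w$-heavy subfamily $\ff'$ with $f(\cap \ff') \geq 1-\varepsilon/2$; and the net property of stage (i) supplies a $P \in \mathcal{N}$ contained in every $F \in \ff'$. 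The dual constraint at $P$ then forces $\sum_F w(F) \leq 1/\beta$, bounding the LP optimum by some constant $M = M(f,p,q,d,\varepsilon)$.

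Stage (iii) rounds the bounded fractional cover of $\ff$ by elements of $\mathcal{N}$ into an integer cover $\ttt \subseteq \mathcal{N}$ of size $c(f,p,q,d,\varepsilon)$ via the standard Alon--Kleitman rounding argument, and by construction each $K \in \ttt$ satisfies $f(K) \geq 1-\varepsilon$. The main technical obstacle is the careful coordination of the two independent approximation losses from Definitions \ref{definition-helly} and \ref{definition-caratheodory} with the loss in the fractional Helly hypothesis; the split $\varepsilon/2$ for each primitive, consistent with the assumption $q \geq F(f,d,\varepsilon/2)$, keeps the cumulative $f$-loss on the net elements within $\varepsilon$ by monotonicity of $f$. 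Setting every $\varepsilon$-parameter to $0$ yields the sharp case.
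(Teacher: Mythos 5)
Your stages (ii) and (iii) correctly reproduce the LP-duality skeleton of the Alon--Kleitman argument, which the paper also follows (Lemmata \ref{lemma-first} and \ref{lemma-second}, together with $\tau^*=\nu^*$). However, stage (i), on which the plan rests, has a genuine gap.

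You assert that randomly sampling $q$-tuples from $\ff$ and inscribing a polytope $P_A\subseteq\cap A$ for each sampled $A$ with $f(\cap A)\geq 1$ produces a collection $\mathcal{N}$ such that every dense $\ff'\subseteq\ff$ with $f(\cap\ff')\geq 1-\varepsilon/2$ contains some $P\in\mathcal{N}$ in its intersection, and you cite the fractional Helly hypothesis as the reason. Neither step holds. Even if a sampled $q$-tuple $A$ lies inside $\ff'$ and satisfies $f(\cap A)\geq 1$, the inscribed polytope $P_A$ sits in $\cap A$, which can be a much larger superset of $\cap\ff'$; nothing forces $P_A\subseteq\cap\ff'$. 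And the fractional Helly theorem gives a single large set contained in a positive fraction of one fixed family; it does not give a bounded universal net that works simultaneously for all dense heavy subfamilies, which is the much stronger statement you invoke in stage (ii) when bounding the dual for every feasible $w$.

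What is missing is precisely the machinery that occupies most of Section \ref{section-strong}: a Helly theorem plus inscribed-polytope approximation $\Rightarrow$ a quantitative Tverberg theorem $\Rightarrow$ a quantitative selection theorem (via the quantitative colorful Carath\'eodory theorem) $\Rightarrow$ a greedy construction of weak $\varepsilon$-nets. Notice that your proposal never uses the ``admits a Helly theorem'' hypothesis at all, even though it is one of the three assumptions and is indispensable for the Tverberg step. Moreover, the weak $\varepsilon$-net the paper constructs is not over $\ff$ but over a separate weighted family $\ttt$ of candidate transversal sets (derived from an optimal fractional cover as in Lemma \ref{lemma-first}), with the hitting condition being containment in $\conv(\cup A)$ for dense $A\subseteq\ttt$, not containment in $\cap\ff'$ for $\ff'\subseteq\ff$. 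Your version of the net is both the wrong object and, built by sampling alone, unachievable.
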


The constant $\frac{\varepsilon}2$ is completely arbitrary, and indeed any $\varepsilon' = \gamma \varepsilon$ for some constant $1 > \gamma >0$ would work (affecting the value of $c$, of course).  However, with the constant shown we get corollaries \ref{theorem-quant-pq}, \ref{theorem-vol-pq} and \ref{theorem-surface}.

In order to prove Theorem \ref{theorem-general-quantitative-pq}, we need new interpretations of results regarding the combinatorial structure of sets of points in $\rr^d$.  That is, given a family $S$ of points in $\rr^d$, we are interested in the combinatorial properties of the family of the convex hulls of subsets of $S$.  In particular, we need versions of Tverberg's theorem, B\'ar\'any's selection theorem and the existence of weak $\varepsilon$-nets for convex hulls.

These are classic results in combinatorial geometry.  As we present them we mention some references for the interested reader.  Each of these results could be seen as a counterpart for a Helly-type theorem.  Indeed, both Helly's theorem and Tverberg's theorem aim to find intersecting families of convex sets. Both the fractional Helly theorem and B\'ar\'any's selection theorem aim to find a positive-fraction subfamily which is intersecting.  Finally, both the $(p,q)$ theorem and the existence of weak $\varepsilon$-nets aim to bound the piercing number of a family of convex sets.

We start with Tverberg's theorem.

\begin{definition}
	We say $f:\C_d \to \rr^+ \cup \{\infty\}$ \emph{admits a Tverberg theorem} if, given $1>\varepsilon>0$, there is an integer $T(f,d,\varepsilon)$ such that for any positive integer $m$ and any real $\lambda >0$, given a family $\ttt = \{T_1, T_2, \ldots, T_n\}$ of $n=(m-1)T(f,d,\varepsilon)+1$ convex sets $T_i \subset \rr^d$ such that $f(T_i) \ge \lambda$ for all $i$, there is a partition of $\ttt$ into $m$ parts $A_1, A_2, \ldots, A_m$ such that
	\[
	f\left( \bigcap_{j=1}^m \conv (\cup A_j)\right) \ge \lambda (1-\varepsilon)
	\]
\end{definition}

We should stress that the relevant feature of the definition above is that the number of sets needed to obtain a Tverberg partition is linear on the number of parts.  That is the key that allows the next results to hold.

The classic result was proven by Tverberg in 1966 \cite{Tverberg:1966tb}, where $f$ is the indicator function of \textit{``being non-empty''}, and $T(f,d,0) = d+1$.  The case $m=2$ is known as Radon's lemma \cite{Radon:1921vh}.  Other quantitative versions of Tverberg's theorem exist \cite{DeLoera:2015wp}, but they are essentially different from the version presented here.  Finding exact bounds for the case when $f$ is the indicator function of \textit{``containing a point with integer coordinates''} is a surprisingly resistant problem, even for $m=2$ \cite{Jamison:1981wz}.  We show that a quantitative Helly and approximations by inscribed polytopes imply a Tverberg theorem.

\begin{theorem}[Quantitative Tverberg theorem]
	Let $f:\C_d \to \rr^+\cup \{\infty\}$ be a monotone function that admits a Helly theorem and approximations by inscribed polytopes.  Then, $f$ admits a Tverberg theorem.
\end{theorem}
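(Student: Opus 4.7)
The plan is to combine the approximation by inscribed polytopes with the classical Tverberg theorem, and then use the Helly hypothesis to amplify the resulting common point into an intersection of large $f$-value. Set $\delta = \varepsilon/2$ and write $C = C(f,d,\delta)$ and $H = H(f,d,\delta)$ for the constants supplied by the inscribed-polytope approximation and the Helly hypothesis, respectively.

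First, apply the inscribed-polytope approximation to each $T_i$ to obtain a polytope $P_i \subset T_i$ with at most $C$ vertices and $f(P_i) \ge (1-\delta)\lambda$. Since $\conv(\bigcup_{i \in A_j} P_i) \subset \conv(\bigcup_{i \in A_j} T_i)$ for any partition $A_1, \ldots, A_m$ of $\ttt$ and $f$ is monotone, it suffices to find a partition for which $f\bigl(\bigcap_{j=1}^m \conv(\bigcup_{i \in A_j} P_i)\bigr) \ge (1-\delta)\lambda$; the two $(1-\delta)$ losses will then compound into the required $(1-\varepsilon)\lambda$. Next, apply the classical Tverberg theorem to a single representative (say the centroid) chosen from each $P_i$: provided $T(f,d,\varepsilon) \ge d+1$, we have $n \ge (m-1)(d+1)+1$ representatives, and classical Tverberg produces a partition $A_1, \ldots, A_m$ of $\ttt$ for which the polytopes $U_j := \conv(\bigcup_{i \in A_j} P_i)$ share a common Tverberg point. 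Each $U_j$ contains some $P_i$, so $f(U_j) \ge (1-\delta)\lambda$ by monotonicity.

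The main obstacle is to upgrade the nonempty intersection $\bigcap_j U_j$ to one with $f$-value at least $(1-\delta)\lambda$, since classical Tverberg by itself yields only a single common point. For this I would invoke the Helly hypothesis on the family $\{U_1, \ldots, U_m\}$, which reduces the task to verifying that $f\bigl(\bigcap_{j \in S} U_j\bigr) \ge \lambda$ for every $S \subset [m]$ of size $H$. To secure this condition uniformly over the $\binom{m}{H}$ subsets, I would take $T(f,d,\varepsilon)$ to be a suitable multiple of $H \cdot C \cdot (d+1)$ and refine the Tverberg construction so that the partition is balanced enough that every $H$-subset of parts collectively contains at least $(H-1)(d+1)+1$ representatives; a secondary application of classical Tverberg inside each such $H$-subcollection, combined with the inscribed-polytope bound used locally, then furnishes a polytope of $f$-value $\ge \lambda$ inside each $\bigcap_{j \in S} U_j$. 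Composing the two $(1-\delta)$ losses gives the target $(1-\varepsilon)\lambda$, and under the sharp versions of the Helly and polytope-approximation hypotheses the same construction runs with $\delta = 0$ to yield the sharp Tverberg conclusion.
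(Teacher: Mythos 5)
Your proposal takes a genuinely different route from the paper, but it has a real gap at its core.

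The paper never invokes classical Tverberg. Instead, it works top-down: it first considers the family $\ff$ of convex hulls of sufficiently large subfamilies of $\ttt$, applies the Helly hypothesis to $\ff$ (since any $H$ of them share a common $T_i$) to extract a single convex set $T_0$ with $f(T_0)\ge(1-\varepsilon_1)\lambda$ lying in $\cap\ff$, then approximates $T_0$ by an inscribed polytope $P$ with $\le C$ vertices, and finally builds the partition $A_1,\ldots,A_m$ greedily using the quantitative colorful Carath\'eodory theorem so that each $\conv(\cup A_j)\supset P$. The target set $P$ is fixed \emph{before} the partition is constructed.

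Your approach is bottom-up: fix a partition via classical Tverberg on one representative per $P_i$, then hope to show $f\bigl(\bigcap_j U_j\bigr)$ is large. The gap is in the amplification step. Classical Tverberg on the centroids only guarantees the $U_j$'s share a \emph{point}; it gives no lower bound on $f$ of the intersection, which could easily be a single point with $f$-value zero (e.g.\ zero volume). You propose to invoke the Helly hypothesis on $\{U_1,\ldots,U_m\}$, which reduces to showing $f\bigl(\bigcap_{j\in S}U_j\bigr)\ge\lambda'$ for every $H$-subset $S$. But your mechanism for this --- ``a secondary application of classical Tverberg inside each such $H$-subcollection'' --- does not produce a set of large $f$-value inside $\bigcap_{j\in S}U_j$: a second Tverberg partition of the representatives in $\bigcup_{j\in S}A_j$ yields a new partition with its own common point, but those new parts do not respect the already-fixed $U_j$'s, and again only a single common point is guaranteed. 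The Helly hypothesis needs the intersections of small subfamilies of the $U_j$'s to have large $f$-value, and nothing in your construction forces the $U_j$'s to overlap in a set of positive $f$-value rather than just a point. (There is also a smaller issue: you target $f(\bigcap_{j\in S}U_j)\ge\lambda$ when each $U_j$ is only known to satisfy $f(U_j)\ge(1-\delta)\lambda$, so even the ambient threshold is too high.) The fix is precisely the paper's reversal of order: identify a robust set $T_0$ of large $f$-value first, then construct a partition whose parts all contain it, using colorful Carath\'eodory rather than Tverberg.
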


\begin{proof}
	In order to prove the result mentioned, we will show the following.  Given $\varepsilon_1, \varepsilon_2 \in (0,1)$ and a positive integer $m$, take 
	\[n = (m-1)d\cdot H(f,d,\varepsilon_1)C(f,d,\varepsilon_2)+1.\]
	Then, given a family $\ttt = \{T_1, T_2, \ldots, T_n\}$ of sets such that $f(T_i) \ge \lambda$ for all $i$, there is a partition of $\ttt$ into $m$ parts $A_1, A_2, \ldots, A_m$ such that
	\[
	f\left( \bigcap_{j=1}^m \conv (\cup A_j) \right) \ge \lambda (1-\varepsilon_1)(1-\varepsilon_2).
	\]
	This will imply that $f$ admits a Tverberg theorem.  In particular $T(f,d,\varepsilon) \le d \cdot H(f,d,\frac{\varepsilon}2) C(f,d,\frac{\varepsilon}2)$.
	
	Given a family $\ttt$ as above, consider the family 
	\[
	\ff = \{F: F= \conv(\cup \ttt'), \ttt' \subset \ttt, |\ttt'| = (m-1)d\cdot [H(f,d,\varepsilon_1)-1]C(f,d,\varepsilon_2)+1\}
	\]
	Note that each $F \in \ff$ is missing at most $(m-1)d \cdot C(f,d,\varepsilon_2)$ sets of $\ttt$.  Thus, any $H(f,d,\varepsilon_1)$ of them have at least one $T_i$ in common, which means that their intersection has size at least $\lambda$ under $f$.  By the definition of $H(f,d,\varepsilon_1)$, there is a convex set $T_0$ with $f(T_0)\ge (1- \varepsilon_1)\lambda$ contained in $\cap \ff$.
	
	Every closed halfspace that contains a point of $T_0$ also contains points of at least $(m-1)d\cdot C(f,d,\varepsilon_2)+1$ sets of $\ttt$.  If this was not the case, there would be a a point $p \in T_0$ and closed halfspace containing $p$ and points of at most $(m-1)d\cdot C(f,d,\varepsilon_2)$ sets of $\ttt$.  This would contradict the fact that $p \in T_0 \subset \cap \ff$.  In particular, we have $T_0 \subset \conv (\cup \ttt)$.
	
	Now we construct $A_1, A_2, \ldots, A_m$ inductively.  By the definition of $C(f,d,\varepsilon_2)$, there we can find a set $P \subset T_0$ of cardinality at most $C(f,d,\varepsilon_2)$ such that $f(\conv(P)) \ge f(T_0) (1-\varepsilon_2) \ge \lambda(1-\varepsilon_1)(1-\varepsilon_2)$.  By the observation above, $P \subset \conv (\cup \ttt)$.
	
	If $C(f,d,\varepsilon_2) > 1$, by the quantitative Carath\'eodory theorem there is a set $U_1 \subset \cup \ttt$ of cardinality at most $d \cdot C(f,d,\varepsilon_2)$ such that $P \subset \conv U_1$.  For each point of $U_1$, let us take one set of $\ttt$ that contains it, and form $A_1$ in this way.  Note that $P \subset \conv (\cup A_1)$ and $|A_1| \le d \cdot C(f,d,\varepsilon_2)$.  By removing $A_1$ from $\ttt$, we have that every closed halfspace that contains a point of $T_0$ also contains points of at least $(m-2)d\cdot C(f,d,\varepsilon_2)+1$ sets of what is left of $\ttt$.  Thus, we can continue this process and generate the sets $A_1, A_2, \ldots, A_m$.  In the end,
	\[
	P \subset \bigcap_{j=1}^m \conv (\cup A_j).
	\]
	Thus, the same holds for $\conv(P)$, and since $f$ is monotone we obtain the result.
	
	If $C(f,d,\varepsilon_2) = 1$ we use the same inductive construction, but we have to take an additional precaution.  Let $\{p\}= T_0$.  When we construct $A_1$, we can take it to be minimal in cardinality.  Thus, if $|A_1| = d+1$, we have that $p \in \interior (A_1)$.  This means that every closed halfspace that contains $p$ in its boundary contains at most $d$ points of $A_1$.  In other words, we can keep the reduction process as before.
\end{proof}

Now we move on to the selection theorem.  Our goal is to show that allowing approximations by inscribed polytopes and a Tverberg theorem is enough to obtain the result below.

\begin{definition}
	Let $f:\C_d \to \rr^+ \cup \{\infty\}$ be a monotone function that can be approximated by inscribed polytopes and let $r(f,d,\varepsilon) = \max \{d \cdot C(f,d,\frac{\varepsilon}2), d+1\}$.  We say $f$ \emph{admits a selection theorem} if for any $1>\varepsilon>0$ there is a constant $\rho(f,d,\varepsilon)$ such that for any $\lambda >0$ and any finite family $\ttt$ of convex sets in $\rr^d$, if $f(T) \ge \lambda$ for all $T \in \ttt$, there is a convex set $T_0$ with $f(T_0) \ge \lambda (1-\varepsilon)$ contained in the convex hull of the union at least $\rho {{|\ttt|}\choose{r(f,d,\varepsilon)}}$ subsets $A \subset \ttt$ of cardinality $r(f,d,\varepsilon)$.
	
\end{definition}

The original result by B\'ar\'any \cite{Barany:1982va} is the case when $f$ is the indicator function of the property \textit{``is non-empty''}.  Even in this case, finding the optimal value of $\rho$ is a remarkably difficult problem.  The current best lower bound is $\rho(f,d,0) \ge \frac{1}{(d+1)!}$ \cite{Gromov:2010eb}.  We recommend \cite{Karasev:2012bj, Pach:1998vx} for further extensions and references of related results.

\begin{theorem}[Quantitative selection theorem]
	Let $f:\C_d \to \rr^+\cup \{\infty\}$ be a monotone 	function that admits a Tverberg theorem and approximations by inscribed polytopes.  Then, $f$ admits a selection theorem.
\end{theorem}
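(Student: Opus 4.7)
The plan is to quantitatively adapt B\'ar\'any's classical proof of the first selection lemma. The classical argument partitions the point set via Tverberg into as many parts as possible, notes that the common Tverberg point lies in the convex hull of every part, and then uses (colorful) Carath\'eodory across all choices of $d+1$ parts to harvest many simplices through that point. In our setting the ``point'' is replaced by a convex set $T_0$ of large $f$-value, the Tverberg partition is furnished by the quantitative Tverberg theorem just proved, and the colorful Carath\'eodory step is supplied by the quantitative colorful Carath\'eodory theorem quoted earlier.

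Concretely, first I would apply the quantitative Tverberg theorem to $\ttt$ with error parameter $\varepsilon/2$, obtaining a partition of $\ttt$ into $m = \lfloor (|\ttt|-1)/T(f,d,\varepsilon/2)\rfloor + 1$ parts $A_1,\ldots,A_m$ whose common intersection $K = \bigcap_{j=1}^m \conv(\cup A_j)$ satisfies $f(K)\ge \lambda(1-\varepsilon/2)$. Then, using approximation by inscribed polytopes with error parameter $\varepsilon/2$, I would produce a finite set $P\subset K$ with $|P|\le C(f,d,\varepsilon/2)$ and $f(\conv P)\ge (1-\varepsilon/2)f(K) \ge \lambda(1-\varepsilon/2)^2 \ge \lambda(1-\varepsilon)$; the convex set $T_0 := \conv P$ is the one claimed by the conclusion.

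Next, for each $r$-subset $J = \{j_1,\ldots,j_r\}$ of $\{1,\ldots,m\}$, where $r = r(f,d,\varepsilon)$, I would produce an $r$-subfamily $A(J)\subset\ttt$ containing exactly one set from each $A_{j_i}$ such that $T_0\subset\conv(\cup A(J))$. This is where the quantitative colorful Carath\'eodory theorem enters: since $P\subset\conv(\cup A_{j_i})$ for every $i$ and since $r = \max\{d\cdot C(f,d,\varepsilon/2),\, d+1\}$ majorizes the bound $n = \max\{|P|d,\, d+1\}$ required by that theorem, we may pick one point per part covering $P$ (using $n$ of the parts), then lift each chosen point to a set of $\ttt$ in the relevant part, and finally pad with one arbitrary set from each of the remaining $r-n$ parts. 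The crucial bookkeeping observation is that because the Tverberg parts are pairwise disjoint, the map $J\mapsto A(J)$ is injective, so distinct index sets $J$ produce distinct subfamilies.

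Finally, a routine comparison $\binom{m}{r}/\binom{|\ttt|}{r}=\prod_{i=0}^{r-1}(m-i)/(|\ttt|-i)$ yields $\rho \ge (2T(f,d,\varepsilon/2))^{-r}$ once $|\ttt|$ is large enough that $m\ge 2r$; for the finitely many small cases one shrinks $\rho$ further so that $\rho\binom{|\ttt|}{r}<1$, making the conclusion trivial as soon as $T_0$ itself is exhibited. There is no single conceptual obstacle: the argument is a near-mechanical translation of B\'ar\'any's proof. The one delicate point is the compatibility of constants—the inscribed approximant $P$ has up to $C(f,d,\varepsilon/2)$ vertices, which is exactly why the definition of $r(f,d,\varepsilon)$ must involve the quantity $d\cdot C(f,d,\varepsilon/2)$, so that the quantitative colorful Carath\'eodory theorem applies with precisely $r$ parts.
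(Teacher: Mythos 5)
Your proposal is correct and follows essentially the same route as the paper's proof: apply the quantitative Tverberg theorem with parameter $\varepsilon/2$, replace the common intersection by an inscribed polytope $T_0$ with at most $C(f,d,\varepsilon/2)$ vertices via the inscribed-polytope approximation, and then run the quantitative colorful Carath\'eodory theorem over $r$-subsets of the Tverberg parts to produce $\binom{m}{r}$ distinct $r$-tuples whose convex hulls contain $T_0$. Your version is slightly more careful in two places the paper glosses over — the padding step when $n = \max\{|P|d, d+1\}$ is strictly less than $r$, and the fact that the paper's final displayed bound $\binom{m}{r} \ge T^{-r}\binom{|\ttt|}{r}$ does not literally hold for small $|\ttt|$ (one must shrink $\rho$ for the finitely many small cases, as you do) — but these are minor tightenings of the same argument rather than a different approach.
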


\begin{proof}
	Take $\ttt$ a finite family of convex sets in $\rr^d$ with $f(T) \ge \lambda$ for all $T \in \ttt$.  Since $f$ admits a Tverberg theorem, there is a convex set $U_0$ with $f(U_0) \ge (1- \frac{\varepsilon}2)\lambda$ and partition $P$ of $\ttt$ into $\frac{|\ttt|-1}{T(f,d,\frac{\varepsilon}2)}+1$ parts such that the convex hull of the union of each part contains $U_0$.
	
	 By the definition of $C(f,d,\frac{\varepsilon}2)$, there is a polytope $T_0 \subset U_0$ with at most $C(f,d,\frac{\varepsilon}2)$ vertices such that $f(T_0) \ge (1-\frac{\varepsilon}2)^2\lambda \ge (1-\varepsilon) \lambda$.  Color each part in $P$ with a different color.  Since the convex hull of the union of each part contains $T_0$, by the quantitative colorful Carath\'eodory theorem, for each $r(f,d,\varepsilon) = \max \{d \cdot C(f,d,\frac{\varepsilon}2), d+1\}$ colors, there is a rainbow choice of subsets of $\ttt$ such that the convex hulls of the sets contains $T_0$.  In other words, the number of $r(f,d,\varepsilon)$-tuples satisfying the desired conditions is at least
	 
	\[
	{{\frac{|\ttt|-1}{T(f,d,\frac{\varepsilon}2)}+1}\choose{r(f,d,\varepsilon)}} \ge \left(\frac{1}{T(f,d,\frac{\varepsilon}2)}\right)^{r(f,d,\varepsilon)}{{|\ttt|}\choose{r(f,d,\varepsilon)}}.
	\]
\end{proof}

Using a selection theorem, we can show the existence of weak $\varepsilon$-nets for convex sets, similarly to \cite{Alon:1992ek}.  The goal now is, given a finite family $\ttt$ of \textit{large} convex sets under $f$, to bound the quantitative equivalent of the piercing number for the family of sets generated by taking the convex hull of the union of \textit{many} subsets of $\ttt$.  We say a set $\ttt$ \textit{pierces} a family of sets $\ff$ if every set in $\ff$ contains a set in $\ttt$.  We use the selection theorem to construct a piercing set following a greedy algorithm.

\begin{definition}\label{definition-weak-net-quant}
	Given $f:\C_d \to \rr^+ \cup \{\infty\}$ and $1>\varepsilon' > 0$, we say $f$ \emph{admits bounded weak $\varepsilon'$-nets} for convex sets if for every $1>\varepsilon >0$ there is a constant $m=m(f,d,\varepsilon, \varepsilon')$ such that for every $\lambda >0$ and every finite family $\ttt$ of convex sets in $\rr^d$ with $f(T) \ge \lambda$ for all $T \in \ttt$, there is a family $\ttt_0$ of at most $m$ convex sets in $\rr^d$ with $f(T_0) \ge (1-\varepsilon)\lambda$ for all $T_0 \in \ttt_0$ such that for every subset $A \subset \ttt$ of cardinality at least $\varepsilon' |\ttt|$, there is at least one set of $\ttt_0$ contained in $\conv (\cup A)$.
	
	We say that $\ttt_0$ is a $\varepsilon'$-weak net for the pair $(\ttt, \varepsilon)$ if it satisfies the conditions above.
\end{definition}

\begin{theorem}
	If $f:\C_d \to \rr^+ \cup \{\infty\}$ is monotone, can be approximated by inscribed polytopes and admits a selection theorem, then for any $1>\varepsilon'>0$, it admits bounded weak $\varepsilon'$-nets for convex sets.
\end{theorem}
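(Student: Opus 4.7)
The plan is a greedy construction modeled on the Alon--Kleitman--Matou\v{s}ek proof of weak $\varepsilon$-nets, with the quantitative selection theorem used at each step in place of the classical first selection lemma. Write $r = r(f,d,\varepsilon)$ and $\rho = \rho(f,d,\varepsilon)$, and let $n = |\ttt|$. Start with $\ttt_0 = \emptyset$. At each iteration, look for a subfamily $A \subset \ttt$ with $|A| \geq \varepsilon' n$ such that no element currently in $\ttt_0$ is contained in $\conv(\cup A)$; if such an $A$ exists, apply the selection theorem to $A$ to obtain a convex set $T^{\ast}$ with $f(T^{\ast}) \geq (1-\varepsilon)\lambda$ and with $T^{\ast} \subset \conv(\cup B)$ for at least $\rho \binom{|A|}{r}$ subsets $B \subset A$ of cardinality $r$. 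Add $T^{\ast}$ to $\ttt_0$ and continue. When no such $A$ exists, halt; by construction the resulting $\ttt_0$ satisfies Definition \ref{definition-weak-net-quant}.

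To bound the number of iterations, I would track the double-counting quantity
\[
M \;=\; \#\bigl\{B \subset \ttt : |B|=r,\ \exists\, T^{\ast} \in \ttt_0 \text{ with } T^{\ast} \subset \conv(\cup B)\bigr\},
\]
so $0 \le M \le \binom{n}{r}$. The key invariant is that each iteration increases $M$ by at least $\rho \binom{|A|}{r}$. Indeed, if some previously added $T^{\ast}_{\mathrm{old}}$ satisfied $T^{\ast}_{\mathrm{old}} \subset \conv(\cup B)$ for some $B \subset A$, then $T^{\ast}_{\mathrm{old}} \subset \conv(\cup B) \subset \conv(\cup A)$, contradicting the fact that $A$ was uncovered at the start of this iteration. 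Thus each of the $\rho \binom{|A|}{r}$ tuples certified for the newly added $T^{\ast}$ is genuinely new to $M$, giving $M$ a strict gain of at least $\rho \binom{\lceil \varepsilon' n \rceil}{r}$ per step.

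For $n \geq 2r/\varepsilon'$ one has $\binom{\lceil \varepsilon' n \rceil}{r} \geq (\varepsilon'/2)^r \binom{n}{r}$ by a direct estimate of the ratio, so the algorithm terminates after at most $(2/\varepsilon')^r / \rho$ iterations, yielding a $\ttt_0$ whose size is bounded only in terms of $f,d,\varepsilon,\varepsilon'$. For the remaining regime $n < 2r/\varepsilon'$, take simply $\ttt_0 = \ttt$: each $T \in \ttt$ already satisfies $f(T) \geq \lambda \geq (1-\varepsilon)\lambda$, and for any subfamily $A \subset \ttt$ every $T \in A \subset \ttt_0$ satisfies $T \subset \conv(\cup A)$, so $\ttt_0$ is trivially a weak net of size at most $2r/\varepsilon'$. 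Either way, $|\ttt_0|$ is bounded by a constant $m(f,d,\varepsilon,\varepsilon') = \max\{(2/\varepsilon')^r/\rho,\ 2r/\varepsilon'\}$.

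The main point where some care is required is the new-$r$-tuple invariant in the second paragraph: it relies on organising the greedy around covering subfamilies $A$ rather than around single points in $\rr^d$, and on the fact that the selection theorem certifies $T^{\ast}$ via many $r$-tuples contained inside the very subfamily $A$ that failed to be covered. Once this invariant is isolated, the rest of the argument is essentially a double-counting bound against $\binom{n}{r}$, with the small-$n$ fallback absorbing the boundary case.
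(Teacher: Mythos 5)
Your proof is correct and follows essentially the same greedy/double-counting scheme as the paper: build $\ttt_0$ iteratively, use the selection theorem on an uncovered $\varepsilon'$-fraction subfamily $A$ to add a set $T^\ast$ whose certifying $r$-tuples $B\subset A$ are all necessarily new (since $\conv(\cup B)\subset\conv(\cup A)$ and $A$ was uncovered), and bound the iteration count against $\binom{n}{r}$. One point worth flagging in your favor: the paper uses the estimate $\binom{\varepsilon' n}{r}\ge(\varepsilon')^{r}\binom{n}{r}$, which is actually false for $\varepsilon'<1$ and $r\ge 2$ (the ratio $\binom{\varepsilon'n}{r}/\binom{n}{r}$ approaches $(\varepsilon')^{r}$ from \emph{below}); your version, using the weaker factor $(\varepsilon'/2)^{r}$ under the hypothesis $n\ge 2r/\varepsilon'$ together with the trivial net $\ttt_0=\ttt$ for small $n$, correctly patches this and yields a valid bound $m(f,d,\varepsilon,\varepsilon')=\max\{(2/\varepsilon')^{r}/\rho,\ 2r/\varepsilon'\}$.
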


\begin{proof}
	Given $\varepsilon >0$, $\lambda >0$ and a finite family $\ttt$ of convex sets in $\rr^d$ with $f(T) \ge \lambda$ for all $T \in \ttt$, we will construct inductively a weak $\varepsilon'$-net for the pair $(\ttt, \varepsilon)$.
	
Let $r(f,d,\varepsilon) = \max\{d \cdot C(f,d,\frac{\varepsilon}2),d+1\}$.	In order to construct a weak $\varepsilon'$-net $\ttt_0$, we start with $\ttt_0$ being the empty set.  Let $R$ be the number of $r(f,d,\varepsilon)$-tuples $B \subset \ttt$ such that $\conv (\cup B)$ contains no set of $\ttt_0$.  If there is a subset  $A \subset \ttt$ of cardinality at least $\varepsilon' |\ttt|$ such that the convex hull of its union contains no set of $\ttt_0$, then we can apply the selection theorem to $A$ and find a convex set $T_0$ with $f(T) \ge \lambda (1-\varepsilon)$ that is contained in the convex hull of the union of at least
	\begin{eqnarray*}
		\rho(f,d,\varepsilon) {{|A|}\choose{r(f,d,\varepsilon)}} & \ge & \rho(f,d,\varepsilon) {{\varepsilon' |\ttt|}\choose{r(f,d,\varepsilon)}} \\
		&\ge & \rho(f,d,\varepsilon)(\varepsilon')^{r(f,d,\varepsilon)} {{|\ttt|}\choose{r(f,d,\varepsilon)}}
	\end{eqnarray*}
	sets $B \subset A$ of cardinality $r(f,d,\varepsilon)$.  Thus, by adding $T$ to $\ttt_0$, we are effectively reducing $R$ by the quantity above.  Thus, this cannot be done more than $\left(\rho(f,d,\varepsilon)(\varepsilon')^{r(f,d,\varepsilon)}\right)^{-1}$ times, which gives the desired result.
\end{proof}

With the existence of weak $\varepsilon$-nets, we are ready to prove the quantitative version of the $(p,q)$ theorem.  For this purpose, we need the following definitions.

Given a monotone function $f:\C_d \to \rr^+ \cup \{\infty\}$, we define
\[
\C_d{(f,\lambda)} = \{K \in \C_d : f(K) \ge \lambda \}.
\]

Given $\lambda >0$ and a finite family $\ff \subset \C_d$, we define

\begin{definition}
	Given $\lambda >0$ and a finite family $\ff \subset \C_d$, we define the \emph{$f$-transversal number} $\tau_{(f,\lambda)}(\ff)$ as the minimum $\sum_{C\in \C_d(f,\lambda)}w(C)$ over all functions $w:\C_d(f,\lambda) \to \{0,1\}$ such that
	\[
	\sum_{C: C \subset F, C \in \C_d(f,\lambda)} w(C) \ge 1
	\]
	for all $F \in \ff$.
\end{definition}

\begin{definition}
	Given $\lambda >0$ and a finite family $\ff \subset \C_d$, we define the \emph{fractional $f$-transversal number} $\tau^*_{(f,\lambda)}(\ff)$ as the minimum $\sum_{C\in \C_d(f,\lambda)}w(C)$ over all functions $w:\C_d(f,\lambda) \to [0,1]$ such that
	\[
	\sum_{C: C \subset F, C \in \C_d(f,\lambda)} w(C) \ge 1
	\]
	for all $F \in \ff$.
\end{definition}

\begin{definition}
	Given $\lambda >0$ and a finite family $\ff \subset \C_d$, we define the \emph{fractional $f$-packing number} $\nu^*_{(f,\lambda)}(\ff)$ as the maximum $\sum_{F \in \ff} w(F)$ over all functions $w: \ff \to [0,1]$ such that
	\[
	\sum_{F: C \subset F, F \in \ff} w(F) \le 1
	\]
	for all $C \in \C_d(f,\lambda)$.
\end{definition}

\begin{lemma}\label{lemma-first}
	Let $f:\C_d \to \rr^+ \cup \{\infty\}$ be a monotone function that admits weak $\varepsilon$-nets for convex sets.  Then, $\tau_{(f,1-\varepsilon)}(\ff)$ is bounded by a function that depends only on $f,d, \varepsilon$, and $\tau^*_{(f,1-\frac{\varepsilon}{2})}(\ff)$.
\end{lemma}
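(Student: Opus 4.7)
The plan is to combine the weak $\varepsilon'$-net theorem with an LP-style discretization of the fractional transversal, mirroring the Alon--Kleitman reduction. Fix $\ff$, let $M = \tau^*_{(f,1-\varepsilon/2)}(\ff)$ (assumed finite, else the lemma is vacuous), and aim to produce an integer transversal by $\C_d(f,1-\varepsilon)$-witnesses of size depending only on $f$, $d$, $\varepsilon$, and $M$.

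First I would fix an optimal fractional transversal $w^* : \C_d(f,1-\varepsilon/2) \to [0,1]$ with finite support. Such a $w^*$ exists because two convex sets with the same ``type'' $\sigma(C) = \{F \in \ff : C \subset F\}$ play identical roles in the LP, so grouping by type reduces it to a finite LP with at most $2^{|\ff|}$ variables; the optimum is attained there with value $M$ and pulls back to a finitely supported $w^*$ with $\sum_C w^*(C) = M$ and $\sum_{C \subset F} w^*(C) \ge 1$ for every $F \in \ff$.

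Next I discretize: for a large integer $N$, form the multiset $\ttt \subset \C_d(f,1-\varepsilon/2)$ in which each $C$ in the support of $w^*$ appears with multiplicity $\lfloor w^*(C)\,N \rfloor$. Then $|\ttt| \le MN$ and, for every $F \in \ff$, $|\{T \in \ttt : T \subset F\}| \ge N - s$, where $s$ is the support size of $w^*$; for $N$ large enough, this is at least $|\ttt|/(2M)$. Now apply Definition \ref{definition-weak-net-quant} to $\ttt$ with inner parameter $\varepsilon/2$ and density parameter $\varepsilon' = 1/(2M)$. This produces a family $\ttt_0$ of at most $m(f,d,\varepsilon/2,1/(2M))$ convex sets with $f(T_0) \ge (1-\varepsilon/2)^2 \ge 1-\varepsilon$ for each $T_0$, and with the property that every $A \subset \ttt$ of size at least $|\ttt|/(2M)$ satisfies $\conv(\cup A) \supset T_0$ for some $T_0 \in \ttt_0$. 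For each $F \in \ff$, taking $A_F = \{T \in \ttt : T \subset F\}$ yields $T_0 \subset \conv(\cup A_F) \subset F$ by convexity of $F$, so $\ttt_0$ pierces $\ff$ by $\C_d(f,1-\varepsilon)$-witnesses of cardinality bounded only by $f$, $d$, $\varepsilon$, and $M$.

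I do not anticipate a serious obstacle: the argument is conceptually the classical Alon--Kleitman reduction in quantitative dress. The key point is that a fractional transversal of value $M$ yields, via discretization, a multiset $\ttt$ in which every $F \in \ff$ contains an $\Omega(1/M)$ fraction of members, which is exactly the density hypothesis of the weak-net theorem. The only bookkeeping items are the finite-support reduction of the LP (via types) and the parameter matching $(1-\varepsilon/2)^2 \ge 1-\varepsilon$ that lets the weak-net's output witnesses qualify as $(1-\varepsilon)$-witnesses.
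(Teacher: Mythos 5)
Your proof is correct and takes essentially the same approach as the paper: discretize the fractional transversal into a multiset $\ttt$, observe that every $F \in \ff$ contains an $\Omega(1/\tau^*)$-fraction of $\ttt$, and apply the weak $\varepsilon'$-net with inner parameter $\varepsilon/2$. The only cosmetic difference is that you discretize via $\lfloor w^*(C) N \rfloor$ for large $N$, whereas the paper assumes rational weights and uses the common denominator directly.
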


\begin{proof}
	Let $w: \C_d (f, 1-\frac{\varepsilon}2) \to [0,1]$ be a function that realizes $\tau^*_{(f,1-\frac{\varepsilon}{2})}(\ff)$, and write $r = \tau^*_{(f,1-\frac{\varepsilon}{2})}(\ff)$.  Without loss of generality, we may assume that $w$ has finite support and takes only rational values.  Let $M$ be the common denominator of $w(K)$ for all $K \in \C_d(f,1-\frac{\varepsilon}2)$.  Let $\ttt$ be the family that is formed by the disjoint union of $M \cdot w(K)$ copies of $K$, for each $K \in \C_d(f,1-\frac{\varepsilon}2)$.  Now consider $\mathcal{K}$ a weak  $\frac{1}{r}$-net for $\ttt$, as in Definition \ref{definition-weak-net-quant} (the hidden constant $\lambda$ being equal to $1-\frac{\varepsilon}2$).
	
	Notice that for each $K \in \mathcal{K}$, we have that $f(K) \ge (1-\frac{\varepsilon}{2})^2 \ge 1-\varepsilon$.  Also, by the definition of $\tau^*$, for a set $F \in \ff$, there are at least $\frac{1}{r}|\ttt|$ sets in $\ttt$ contained in $F$.  Thus, by the definition of $\mathcal{K}$, there must be a set in $\mathcal{K}$ contained in $F$.  This implies that the indicator function of $\mathcal{K}$ satisfies the conditions in the definition of $\tau_{(f,1-\varepsilon)}(\ff)$.  Moreover, $|\mathcal{K}|$ depends only on $r,f, \varepsilon$, and $d$, as desired. 
\end{proof}

\begin{lemma}\label{lemma-second}
	Let $f:\C_d \to \rr^+ \cup \{\infty\}$ be a monotone function that admits a fractional Helly theorem and $p \ge q \ge F(f,d,\frac{\varepsilon}2)$.  Let $\ff$ be a finite family of convex sets such that for every $K \in \ff$ we have $f(K) \ge 1$ and out of every $p$ sets in $\ff$, there is a $q$-tuple $A$ such that $f(\cap A) \ge 1$.  Then $\nu^*_{(1-\frac{\varepsilon}{2})}(\ff)$ is bounded by a function that depends only on $f,p,q, \varepsilon$, and $d$.
\end{lemma}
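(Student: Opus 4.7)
The plan is to convert the fractional packing into a weighted multiset of convex sets on which we can exploit the $(p,q)$-property combined with the fractional Helly theorem. Take $w:\ff\to[0,1]$ realizing $\nu^*_{(f,1-\varepsilon/2)}(\ff)=N$; since the optimum of this LP is attained at a rational point we may assume all $w(F)$ are rational with a common denominator $M$. Define the multiset $\mathcal{M}$ that contains $m(F):=Mw(F)$ distinct copies of each $F\in\ff$, so $|\mathcal{M}|=MN$. The packing constraint becomes the statement that for every $C\in \C_d(f,1-\varepsilon/2)$, the total number of copies in $\mathcal{M}$ whose underlying set contains $C$, namely $\sum_{F\supset C}m(F)$, is at most $M$.

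The crux of the argument is to show that $\mathcal{M}$ satisfies a $(p',q)$-property with $p':=p(q-1)$, i.e.\ every sub-multiset of $\mathcal{M}$ of cardinality $p'$ contains $q$ copies whose underlying intersection has $f$-value at least $1$. Given such a sub-multiset, split into two cases. If some $F\in\ff$ appears with multiplicity at least $q$, then $q$ copies of $F$ already form a good $q$-subset since their intersection equals $F$ itself and $f(F)\ge 1$ by hypothesis. Otherwise every multiplicity is at most $q-1$, so by pigeonhole the sub-multiset involves at least $p'/(q-1)=p$ distinct elements of $\ff$; applying the $(p,q)$-property of $\ff$ to any $p$ of them yields the desired good $q$-subset inside our sub-multiset.

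From the $(p',q)$-property on $\mathcal{M}$, a standard double-counting argument shows that at least a $1/\binom{p'}{q}$ fraction of the $q$-subsets of $\mathcal{M}$ have underlying intersection of $f$-value at least $1$. By monotonicity of $f$, every $F$-subset (with $F:=F(f,d,\varepsilon/2)\le q$) of a good $q$-subset is itself good, and a second averaging then yields that at least a $1/\binom{p'}{q}$ fraction of the $F$-subsets of $\mathcal{M}$ are good. We now invoke the fractional Helly theorem on $\mathcal{M}$ (viewed as a family by labeling its copies as distinct) with $\alpha:=1/\binom{p'}{q}$ and loss $\varepsilon/2$: this produces a sub-multiset $\mathcal{M}^*\subset\mathcal{M}$ with $|\mathcal{M}^*|\ge\beta|\mathcal{M}|=\beta MN$ and $f(\cap\mathcal{M}^*)\ge 1-\varepsilon/2$, where $\beta=\beta(1/\binom{p'}{q},f,d,\varepsilon/2)>0$.

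Finally, since $\cap\mathcal{M}^*\in\C_d(f,1-\varepsilon/2)$, the packing constraint forces $\sum_{F\supset\cap\mathcal{M}^*}m(F)\le M$, and every label in $\mathcal{M}^*$ has underlying set containing $\cap\mathcal{M}^*$, so $|\mathcal{M}^*|\le M$. Combining this with $|\mathcal{M}^*|\ge\beta MN$ yields $N\le 1/\beta$, a bound depending only on $p$, $q$, $f$, $\varepsilon$, and $d$, as claimed. The main obstacle I expect is precisely the transfer of the $(p,q)$-condition from $\ff$ to the multiset $\mathcal{M}$, since a $p$-sub-multiset need not consist of $p$ distinct members of $\ff$; the pigeonhole step in the second paragraph bridges this gap, at the cost of inflating $p$ to $p(q-1)$.
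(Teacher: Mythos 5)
Your proof is correct and follows essentially the same route as the paper: scale the fractional packing to a multiset of copies, transfer the $(p,q)$-property to the multiset via a pigeonhole argument (either $q$ copies of one set, or $p$ distinct sets), apply the fractional Helly theorem, and then use the packing constraint to bound the size. The only cosmetic difference is that you use $p' = p(q-1)$ where the paper uses the slightly tighter $p' = (p-1)(q-1)+1$; both work, and you also spell out the double-counting step that the paper leaves implicit.
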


\begin{proof}
	Let $w:\ff \to [0,1]$ be a function that realizes $\nu^*_{(1-\frac{\varepsilon}{2})}(\ff)$, and write $r = \nu^*_{(1-\frac{\varepsilon}{2})}(\ff)$.  We may assume without loss of generality that $w(F)$ is rational for all $F \in \ff$.  Let $w(F) = \frac{N_F}{M}$ where $M$ is the common denominator for all $w(F)$ with $F \in \ff$.  Let $\ff'$ be a family consisting of $N_F$ copies of $F$ for each $F \in \ff$, and $N = |\ff'|$.  Note that $\frac{N}{M} = \sum_{F \in \ff} \frac{N_F}{M} = r$.
	
	The family $\ff'$ satisfies that out of every $(p-1)(q-1)+1$ of its sets, there is a $q$-tuple $A$ such that $f(\cap A) \ge 1$.  This is because given $(p-1)(q-1)+1$ sets of $\ff'$, there are either $q$ copies of the same set or at least $p$ different sets of $\ff$.  In either case, we have a $q$-tuple satisfying the property.  Thus, there is a positive fraction of the $F(f,d,\frac{\varepsilon}2)$-tuples of $\ff'$ whose intersection $A$ satisfies $f(A) \ge 1$.  By the fractional Helly theorem, there must be a set $A_0$ with $f(A_0) \ge 1-\frac{\varepsilon}{2}$ contained in at least $\beta N$ sets of $\ff'$.  In other words,
	\[
	1 \ge \sum_{F \in \ff: A_0 \subset F} w(F) = \sum_{F \in \ff: A_0 \subset F} \frac{N_F}{M} \ge \frac{1}{M} \beta N = \beta r.
	\]
	
	This implies that $r \le \frac{1}{\beta}$, and $\beta$ is bounded by a function depending only on $p,q,f, \varepsilon$ and $d$.
\end{proof}

\begin{proof}[Proof of Theorem \ref{theorem-general-quantitative-pq}]
	By linear programming duality, $\tau_{(f,1-\frac{\varepsilon}{2})}^*(\ff) = \nu_{(f,1-\frac{\varepsilon}{2})}^*(\ff)$.  Thus, using the lemmata \ref{lemma-first} and \ref{lemma-second}, we have a bound on $\tau_{(f,1-\frac{\varepsilon}{2})}(\ff)$ that only depends on $p,q,f,d,$ and $\varepsilon$.  Notice that a function that realizes $\tau_{(f,1-\frac{\varepsilon}{2})}(\ff)$ is the indicator of the family $\ttt$ of sets we are looking for in the theorem, i.e. every set in $\ttt$ is large according to $f$, and every set in $\ff$ contains a set in $\ttt$.
\end{proof}

\section{Conditions for a fractional Helly theorem}\label{section-fractional-both}

In order to prove the Corollaries \ref{theorem-quant-pq}, \ref{theorem-vol-pq} and \ref{theorem-surface}, it suffices to show that the volume, surface area and the indicator of having $k$ points of $S$ satisfy the desired properties.  The first two conditions (admitting a Helly theorem and being approximable with inscribed polytopes) were discussed in section \ref{section-strong}.  Thus, it only remains to show the fractional Helly theorems.  We do this in Lemma \ref{lemma-fractional-general} below.

In order to prove this lemma we will use previous results regarding the convex floating body \cite{Schutt:1990tj}.  For a convex set $K$ with finite volume, we define $K(\vol,\varepsilon)$ as the set of points $x$ such that $\vol (H \cap K) \ge \vol(K)(1-\varepsilon)$ for all closed halfspaces $H \ni x$.  There are several results regarding the volume of the floating body \cite{Barany:2010cy}.  For sufficiently smooth bodies $K$ of unit volume, we have
\[
\vol(K(\vol,\varepsilon)) \ge 1-c\varepsilon^{2/(d+1)}
\]
where $c$ is a constant depending only on the dimension (see \cite{leich}).  This result is described in \cite{Barany:2010cy}, where it is mentioned that it was proved by Buchta, Gruber and M\"uller but only appears as a private communication.  The result holds for sufficiently smooth convex sets, and the constant $c$ is maximized by ellipsoids.  However, by standard approximation results, we can see that the bound above extends to all convex sets.  The reason why this bound is presented for sufficiently smooth bodies is that in that case there is a matching upper bound.

Note that if $K$ is a convex set of unit volume and $K'$ is another convex set such that $\vol(K \cap K') \ge 1-\varepsilon$, then $K(\vol, \varepsilon) \subset K'$.  There is no reason why the floating body is unique to the volume, and indeed we can introduce the following definition

\begin{definition}\label{definition-floating-body}
	Given a monotone function $f:\C_d \to \rr^+ \cup \{\infty\}$, we say that there is a \emph{floating body} for $f$ if for every $1 > \varepsilon > 0$, there is a $\delta = \delta(\varepsilon) > 0$ such that any convex set $K$ with $f(K) < \infty$, there is a convex set $K(f,\varepsilon)$ such that 
	\begin{itemize}
		\item $f(K(f,\varepsilon)) \ge (1- \delta)f(K)$ and
		\item for any convex set $A$, if $f(A \cap K) \ge (1-\varepsilon)f(K)$, then $K(f,\varepsilon) \subset A$.
	\end{itemize}
	Morevoer, we require that $\delta (\varepsilon) \to 0$ if $\varepsilon \to 0$.  Throughout the rest of this section we will assume that $f$ and $d$ are fixed.  Even though $\delta (\varepsilon)$ depends on them, it will be useful to denote it this way.
\end{definition}

We first show how some general properties of $f$ are enough to prove the existence of a floating body, which the reader can check are satisfied for the surface area.  We then show how the existence of a floating body and a Helly theorem are enough to prove a fractional Helly theorem and a colorful Helly theorem.

\begin{definition}
	We say a function $f:\C_d \to \rr^+ \cup \{\infty\}$ is
	\begin{itemize}
		\item \emph{homogeneous} if there is a constant $k$ such that for all $\alpha >0$, we have that $f(\alpha K ) = \alpha^k f(K)$;
		\item \emph{strictly monotone} if $f$ is monotone and for every convex set $B$ with $0<f(B) < \infty$, if $A \subset B$ is a convex set such that the closure of $A$ is different from the closure of $B$, then $f(A) < f(B)$;
		\item \emph{well-defined}, if $f(\emptyset) = 0$ and $f(B) \in \{0,\infty\}$ for any unbounded convex set $B$.
	\end{itemize}
\end{definition}

\begin{theorem}
	Let $f:\C_d \to \rr^+ \cup \{\infty\}$ be a continuous, strictly monotone, homogeneous, and well-defined function.  Then, there is a floating body for $f$.
\end{theorem}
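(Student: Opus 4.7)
The plan is to construct $K(f,\varepsilon)$ explicitly as an intersection of closed halfspaces, deduce the containment property directly from the construction, establish the $f$-lower bound for each fixed $K$ via compactness and continuity, and finally invoke homogeneity to make the bound uniform in $K$.

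I define
\[
K(f,\varepsilon) := \bigcap \{H : H \text{ a closed halfspace with } f(H \cap K) \ge (1-\varepsilon)f(K)\}.
\]
Every closed halfspace containing $K$ satisfies $f(H\cap K) = f(K)$ and so appears in the intersection, whence $K(f,\varepsilon)\subset \overline{K}$; since $f(K)<\infty$ forces $K$ bounded by well-definedness (the case $f(K)=0$ being trivial), we may treat $K$ as closed and get $K(f,\varepsilon)\subset K$. The containment property is now immediate: any convex $A$ with $f(A\cap K)\ge (1-\varepsilon)f(K)$ equals the intersection of the closed halfspaces $\{H_i\}$ that contain it, and for each $i$ the inclusion $A\cap K\subset H_i\cap K$ combined with monotonicity gives $f(H_i\cap K)\ge (1-\varepsilon)f(K)$, so every $H_i$ lies in the defining family of $K(f,\varepsilon)$; intersecting the resulting inclusions $K(f,\varepsilon)\subset H_i$ yields $K(f,\varepsilon)\subset A$.

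For the first property I will show $K(f,\varepsilon)\to K$ in Hausdorff distance as $\varepsilon\to 0$, and conclude by continuity of $f$. Fix $x\in\inter K$ and $\nu\in S^{d-1}$, and let $H_{x,\nu}$ be the closed halfspace with $x\in\partial H_{x,\nu}$ and outward normal $\nu$. Because $x$ is interior to $K$ while lying on $\partial H_{x,\nu}$, points $x+t\nu$ for small $t>0$ lie in $K\setminus H_{x,\nu}$, so the closure of $H_{x,\nu}\cap K$ is strictly smaller than $K$; strict monotonicity then gives $f(H_{x,\nu}\cap K)<f(K)$. Continuity of $f$ together with Hausdorff-continuity of $H_{x,\nu}\cap K$ in $\nu$ makes $\nu\mapsto f(H_{x,\nu}\cap K)$ continuous on the compact sphere, so it attains a maximum $M(x)<f(K)$. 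Any halfspace $H$ with $x\notin H$ is contained in $H_{x,\nu}$ for $\nu$ the outward normal of $H$, so monotonicity gives $f(H\cap K)\le M(x)$; hence $x\in K(f,\varepsilon)$ whenever $\varepsilon<1-M(x)/f(K)$. The same argument makes $M$ continuous in $x$, so on any compact subset $C\subset\inter K$ the threshold is uniformly positive, and $C\subset K(f,\varepsilon)$ for all sufficiently small $\varepsilon$. Since a convex body equals the closure of its interior, this yields $K(f,\varepsilon)\to K$ in Hausdorff distance, and continuity of $f$ gives $f(K(f,\varepsilon))\to f(K)$.

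The hardest step is to upgrade this per-body convergence to a single $\delta(\varepsilon)\to 0$ valid for all $K$. Homogeneity makes $f(K(f,\varepsilon))/f(K)$ scale-invariant, so it suffices to work in the slice $\{K:f(K)=1\}$. I would argue by contradiction: if no such $\delta$ exists, there are normalized bodies $K_n$ and $\varepsilon_n\to 0$ with $f(K_n(f,\varepsilon_n))\le 1-\eta_0$ for some fixed $\eta_0>0$. Well-definedness rules out unbounded limits (where $f\in\{0,\infty\}$) and continuity rules out degeneration to lower-dimensional sets (where $f$ would drop); combined with homogeneity, this lets one extract via Blaschke-type selection a Hausdorff-convergent subsequence $K_n\to K_\infty$ with $f(K_\infty)=1$, at which point the previous paragraph's argument applied to $K_\infty$, carefully chased along the tail, produces a contradiction. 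The chief obstacle in this step is the absence of affine invariance (the lever that makes the volume case slick); all compactness must be squeezed out of homogeneity together with the two dichotomies built into well-definedness and continuity.
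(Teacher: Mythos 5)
Your construction of $K(f,\varepsilon)$ as the intersection of all closed halfspaces $H$ with $f(H\cap K)\ge(1-\varepsilon)f(K)$ is the same object as the paper's $\cap_v H'_v$ (intersecting the containment-minimal halfspace per direction), your separation argument for the containment property matches the paper's, and the per-body convergence via the function $M(x)=\max_\nu f(H_{x,\nu}\cap K)$ on compact subsets of $\inter K$ is a clean version of the paper's argument; so up to the final uniformity step you are following essentially the same route.

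The gap is in the last paragraph. You propose to work in the slice $\{K : f(K)=1\}$ and extract a Blaschke-convergent subsequence, claiming that well-definedness and continuity together rule out escape to infinity and degeneration. Neither claim holds: Blaschke selection requires the $K_n$ to lie in a fixed bounded region, which the constraint $f(K_n)=1$ does not provide (think of increasingly long thin bodies with unit surface area), and continuity of $f$ in the Hausdorff metric does not prevent a sequence of full-dimensional bodies from Hausdorff-converging to a lower-dimensional one---it only tells you what $f$ does to the limit. You do flag the absence of affine invariance as the obstacle, but the resolution you sketch does not close it. The paper's fix is to normalize differently: since $\delta(K,\varepsilon)=1-f(K(f,\varepsilon))/f(K)$ is scale-invariant by homogeneity, one may take the supremum over \emph{compact convex sets contained in the closed unit ball} with $0<f(K)\le 1$, a slice that is genuinely compact under the Hausdorff metric, and then argue uniform continuity of $f$ on that compact space to pass the per-body convergence to a uniform $\delta(\varepsilon)$. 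If you replace your normalization $\{f(K)=1\}$ with the paper's $\{K\subset \overline{B}(0,1)\}$ and drop the two incorrect ``rules out'' claims, the argument goes through.
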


Note that our condition of continuity over convex sets is taken under the topology induced by the Hausdorff metric.

\begin{proof}
	Since $f$ is homogeneous, well-defined and continuous, it is enough to show the existence of floating bodies for sets $K$ which are compact and contained in the closed ball of radius one around the origin.  Let us show that for a fixed $K$ there is a $\delta=\delta(K,\varepsilon)$ and a floating body $K'=K(f,\varepsilon)$ satisfying the conditions of Definition \ref{definition-floating-body} with $f(K') = (1- \delta(K,\varepsilon))f(K)$.  We may assume that $\infty > f(K) > 0$ without loss of generality.	
	
	For every direction $v$, consider a $v$-halfspace a set of the form $\{x \in \rr^d: \langle x, v \rangle \le \alpha\}$ for some real $\alpha$, where $\langle \cdot, \cdot \rangle$ denotes the usual dot propduct.  Let $H'_v$ be the containment-minimal $v$-halfspace such that $f (H'_v \cap K) \ge (1- \varepsilon)f(K)$.  Note that $H'_v$ exists and is unique since $f$ is well-defined, strictly monotone, and continuous.  We define $K' = \cap_{v} H'_v$ and $\delta (K, \varepsilon) = 1- \frac{f(K')}{f(K)}$.
	
	Let $K'' \subset K$ be any compact convex set such that $f(K'') = 1- \varepsilon$.  If $K' \not\subset K''$, there must be a closed halfspace which strictly separates a point of $K'$ from all of $K''$.  However, if $v$ is the direction defining this hyperplane, it would contradict $K' \subset H'_v$.

	Let us show that $\delta(K, \varepsilon) \to 0$ as $\varepsilon \to 0$. If this was not true, there would be an $\alpha >0$ such that $\delta(K, \varepsilon) < (1- \alpha)f(K)$ for all $\varepsilon >0$.  Since $K(f,\varepsilon_1) \subset K(f, \varepsilon_2)$ if $\varepsilon_1 > \varepsilon_2$, we can consider the convex set $K_0 = \cup_{\varepsilon>0} K(f,\varepsilon)$.  By continuity of $f$, we have that $f(K_0) \le 1-\alpha$ and $K_0 \subset K$.  Thus, since $f$ is strictly monotone, there must be a $v$-halfspace $H_0$ such that $K_0 \subset H_0$ and $H_0$ does not contain the closure of $K$.
	
	Let $K_1 = K \cap H_0$.  It is clear that $K_0 \subset K_1 \subset K$, and the closure of $K_1$ and $K$ are different, so $f(K_1) = (1-\beta) f(K)$ for some $\beta > 0$.  Notice then that $K(f,\beta / 2) \not\subset K_1$, contradicting the fact that $K(f, \beta /2) \subset K_0$.  Thus $\delta(K, \varepsilon) \to 0$ as $\varepsilon \to 0$.
	
	Let $\delta(\varepsilon)$ be the supremum of $\delta(K, \varepsilon)$ over all compact $K$ with $f(K) \le 1$ contained in the closed unit ball centered at the origin.  Let us show that $\delta(\varepsilon) \to 0$ as $\varepsilon \to 0$.  If this was not the case, there would be a sequence of pairs $(K_1, \varepsilon_1), (K_2, \varepsilon_2), \ldots$ such that $\varepsilon_n \to 0$ and $K_n(f,\varepsilon_n) \not\to 0$.  However, since the space of sets we considered is compact under the Hausdorff metric, there would be a set $K^*$ such that $K_n \to K^*$.  Since $f$ is a continuous function and we are working with a compact metric space, it is uniformly continuous.  Thus, we can use the constructed sequence to show that $f(K^*(f,\varepsilon_n)) \not\to 0$ as $n \to \infty$, a contradiction.  
\end{proof}

\begin{lemma}\label{lemma-fractional-general}
	Let $f:\C_d \to \rr^+ \cup \{\infty\}$ be a well-defined, continuous, strictly monotone function that admits a Helly theorem and such that there is a floating body for $f$.  Then $f$ admits a fractional Helly theorem.  Moreover
	\[
	F(f,d,\varepsilon) \le H(f,d,\delta^{-1}(\varepsilon))
	\]
	where $\delta$ is the parameter induced by the floating body for $f$.
\end{lemma}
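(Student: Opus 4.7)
Set $\varepsilon' := \delta^{-1}(\varepsilon)$, so that $\delta(\varepsilon') = \varepsilon$, and let $F := H(f, d, \varepsilon')$. Consider a family $\ff$ of $n$ convex sets in $\rr^d$ with at least $\alpha\binom{n}{F}$ good $F$-tuples $A$ (i.e.\ $f(\cap A) \ge 1$). The goal is to produce a subfamily $\ff' \subset \ff$ of size at least $\beta n$ with $f(\cap \ff') \ge 1-\varepsilon$, for some $\beta = \beta(\alpha, f, d, \varepsilon) > 0$.

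The key observation is that, for any good $F$-tuple $A$, the convex set $K := \cap A$ satisfies $f(K) \ge 1$, and its floating body $T := K(f, \varepsilon')$ satisfies $f(T) \ge (1-\delta(\varepsilon')) f(K) = (1-\varepsilon) f(K) \ge 1-\varepsilon$. Moreover, by the defining property of $K(f, \varepsilon')$, we have $T \subset C$ for every convex $C$ with $f(C \cap K) \ge (1-\varepsilon') f(K)$. Hence it suffices to find a single good $F$-tuple $A$ for which at least $\beta n$ sets of $\ff$ satisfy this containment condition relative to $K = \cap A$; the intersection of those sets will then contain $T$, yielding $f(\cap \ff') \ge f(T) \ge 1-\varepsilon$.

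The existence of such an $A$ is to be established by averaging over the good $F$-tuples. Double-counting the pairs (good $F$-tuple, its $(F-1)$-subtuple) produces an $(F-1)$-tuple $B \subset \ff$ that extends to a good $F$-tuple via at least $\alpha(n-F+1)$ choices of the additional set $C$; each such $C$ satisfies $f(C \cap (\cap B)) \ge 1$. If $f(\cap B) \le 1/(1-\varepsilon')$, then automatically $f(C \cap (\cap B)) \ge (1-\varepsilon') f(\cap B)$, so the floating body $(\cap B)(f, \varepsilon')$ is contained in each such $C$, and we take $\ff'$ to be this collection of $\alpha(n-F+1)$ extending sets together with $B$.

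The main obstacle is the case $f(\cap B) > 1/(1-\varepsilon')$, in which the bound $f(C \cap (\cap B)) \ge 1$ is not strong enough to force the floating-body containment. To dispose of this, I would exploit the continuity, strict monotonicity, and well-definedness of $f$ to carve out a convex sub-body $K' \subseteq \cap B$ with $f(K') = 1$, whose floating body $K'(f, \varepsilon')$ still has $f$-value at least $1-\varepsilon$. The remaining --- and most delicate --- task is to argue, possibly by iterating the averaging refinement so that $f(\cap B)$ shrinks into the usable range $[1, 1/(1-\varepsilon')]$, that a positive fraction of the extending sets $C$ still satisfy $f(C \cap K') \ge (1-\varepsilon') f(K') = 1-\varepsilon'$. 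Once this controlled reduction is in place the floating body argument closes the proof and yields the announced bound $F(f,d,\varepsilon) \le H(f,d,\delta^{-1}(\varepsilon))$.
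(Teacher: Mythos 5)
You correctly identify the right target (produce a floating body and show it is contained in a $\beta$-fraction of the sets) and the right mechanism (a pigeonhole count over $(F-1)$-subtuples), and you even pinpoint the central obstacle: when $f(\cap B)$ is much larger than $1$, the hypothesis $f(C\cap(\cap B))\ge1$ gives no leverage over $(1-\varepsilon')f(\cap B)$, so the floating-body property of $\cap B$ cannot be invoked. But your proposed fix --- carve out an arbitrary $K'\subseteq\cap B$ with $f(K')=1$ and then ``iterate the averaging refinement'' to force $f(\cap B)$ into the usable range --- is not an argument, and I don't see how to make it one: knowing $f(C\cap(\cap B))\ge1$ tells you nothing about $f(C\cap K')$, since $K'$ could sit entirely in the part of $\cap B$ that $C$ misses, and no amount of re-averaging over sub-tuples removes that freedom.

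The paper's proof closes exactly this gap with a construction your outline lacks. Fix a single direction $v$ once and for all. For every $(h-1)$-tuple $B$ with $f(\cap B)\ge1$ (where $h=H(f,d,\varepsilon)$), trim $\cap B$ to $K_B:=(\cap B)\cap H_B$, where $H_B$ is the $v$-halfspace chosen so that $f(K_B)=1$ exactly. Because all $H_B$ are halfspaces with the same outward normal $v$, they are totally ordered by containment. Given a good $h$-tuple $A$, pick the $(h-1)$-subtuple $A'$ whose $H_{A'}$ is containment-maximal among the $H_{A''}$; then every other $(h-1)$-subtuple $A''$ satisfies $(\cap A'')\cap H_{A'}\supseteq K_{A''}$, so in the augmented family $A\cup\{H_{A'}\}$ every $h$-element subfamily has $f$-value at least $1$. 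The quantitative Helly theorem then gives $f\bigl((\cap A)\cap H_{A'}\bigr)\ge1-\varepsilon$, i.e.\ $f(G\cap K_{A'})\ge(1-\varepsilon)f(K_{A'})$ where $G$ is the one set of $A$ not in $A'$, and the floating-body property yields $K_{A'}(f,\varepsilon)\subset G$. This is the step that forces the extending set $G$ to actually capture the floating body, and it is precisely what is missing from your write-up: the coherent one-directional trimming plus the maximality-then-Helly argument. The pigeonhole count over the map $A\mapsto A'$ then finishes exactly as you sketch.
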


The reader may notice that any function that is well-defined, continuous and homogeneous allows for a Helly theorem, using a compactness argument such as the one in the last step of the proof above.  The same argument also shows that $f$ can be approximated by inscribed polytopes.

\begin{proof}[Proof of Lemma \ref{lemma-fractional-general}]
	For a convex set $K$, we refer to $f(K)$ as its \emph{size}.  In order to prove this lemma, it suffices to show that, given $\alpha >0$ and $\ff$ a finite family of $n$ convex sets in $\rr^d$, if an $\alpha$-fraction of the $H(f, d, \varepsilon)$-tuples are intersecting, then there is a set of size at least $1-\delta(\varepsilon)$ contained in a positive fraction $\beta$ of the sets in $\ff$.  For simplicity, let $h=H(f, d, \varepsilon)$.
	
	We may assume that the sets in $\ff$ are bounded.  Let $v$ be a direction.  We consider a $v$-halfspace to be a set of the form $\{x: \langle x, v\rangle \le \alpha\}$ for some real $\alpha$.  For each $(h-1)$-tuple $B=\{F_1, F_2, \ldots, F_{h-1}\}$ such that $f(\cap B) \ge 1$, let $H_B$ be the $v$-halfspace such that $f((\cap B)\bigcap H_B)=1$.  We denote this intersection by $K_B$.

Now consider an $h$-tuple $A$ of $\ff$ such that $f(\cap A) \ge 1$.  Among its $(h-1)$-tuples, there must be one, call it $A'$, such that $H_{A'}$ is containment-maximal.  It is clear that if we add $H_{A'}$ to $A$, in the resulting family the intersection of any $h$ sets has size at least one.  By the definition of $h$ we have that the intersection of this whole family has size at least $1-\varepsilon$.  However, this implies that the set not in $A'$ contains $K_{A'}(f,\varepsilon)$.

For each $h$-tuple $A$ such that $f (\cap A) \ge 1$, let $A'$ be one of its $(h-1)$-tuple with containment-maximal $H_{A'}$.  If a positive fraction of the $h$-tuples satisfy the condition of the problem, then a simple counting argument shows that there must be an $(h-1)$-tuple $M$ which was assigned to at least $\beta n$ different $h$-tuples, for some positive $\beta$ not depending on $n$.  Thus at least $\beta n$ sets contain $K_{M}(f,\varepsilon)$, as desired.
\end{proof}

\begin{corollary}\label{lemma-fractional-volume}
The function $f(\cdot) = \vol (\cdot)$ admits a fractional Helly theorem.  Moreover, for any fixed dimension $d$, we have that $F (\vol, d, \varepsilon) = O\left(\varepsilon^{-(d^2-1)/4}\right)$.	
\end{corollary}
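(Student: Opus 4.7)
The plan is to apply Lemma \ref{lemma-fractional-general} directly to $f = \vol$ and then track the dependence on $\varepsilon$. First I would verify the hypotheses of that lemma. Volume is well-defined in the sense of the paper: $\vol(\emptyset)=0$, and an unbounded convex set has volume $0$ (if not full-dimensional) or $\infty$. It is continuous with respect to the Hausdorff metric on bounded convex bodies, and strictly monotone, since any convex set strictly contained (in the closure sense) in a full-dimensional convex body has strictly smaller volume. A Helly theorem holds with $H(\vol, d, \varepsilon) = \Theta(\varepsilon^{-(d-1)/2})$ by the theorem of De Loera et al.\ recalled in the introduction. Finally, the classical Sch\"utt-Werner floating body \cite{Schutt:1990tj} witnesses Definition \ref{definition-floating-body}, and the Buchta-Gruber-M\"uller estimate discussed at the start of Section \ref{section-fractional-both} provides $\delta(\varepsilon) = c\,\varepsilon^{2/(d+1)}$ for some constant $c = c(d)$.

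With the hypotheses in place, Lemma \ref{lemma-fractional-general} yields that $\vol$ admits a fractional Helly theorem, with the quantitative bound
$$F(\vol, d, \varepsilon) \le H\!\left(\vol,\, d,\, \delta^{-1}(\varepsilon)\right).$$
Inverting $\delta$ gives $\delta^{-1}(\varepsilon) = (\varepsilon/c)^{(d+1)/2}$, and substituting into the asymptotic bound for $H(\vol, d, \cdot)$ produces, for $d$ fixed,
$$F(\vol, d, \varepsilon) \;=\; O\!\left(\bigl((\varepsilon/c)^{(d+1)/2}\bigr)^{-(d-1)/2}\right) \;=\; O\!\left(\varepsilon^{-(d^2-1)/4}\right),$$
matching the claimed rate.

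The single technical point I would want to confirm is that the Buchta-Gruber-M\"uller floating-body estimate, originally formulated for sufficiently smooth convex bodies, extends to all convex bodies as required by Definition \ref{definition-floating-body}. This is alluded to in the discussion preceding that definition and follows from a standard Hausdorff approximation argument combined with the continuity of $\vol$. I do not anticipate a genuine obstacle: once the floating-body estimate for volume is available in the required form, the corollary is a mechanical combination of Lemma \ref{lemma-fractional-general} with known quantitative results from convex geometry, and the rate $\varepsilon^{-(d^2-1)/4}$ arises simply from composing the two exponents $(d+1)/2$ and $(d-1)/2$.
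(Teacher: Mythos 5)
Your proposal is correct and reconstructs exactly the argument the paper intends: apply Lemma \ref{lemma-fractional-general} to $f=\vol$, using the Sch\"utt--Werner floating body with the Buchta--Gruber--M\"uller estimate $\delta(\varepsilon)=c\varepsilon^{2/(d+1)}$ and the bound $H(\vol,d,\cdot)=\Theta(\cdot^{-(d-1)/2})$, then compose the exponents to get $\varepsilon^{-(d^2-1)/4}$. The technical caveat you flag about extending the floating-body estimate beyond smooth bodies is indeed addressed by the standard approximation argument the paper cites, so there is no gap.
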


\begin{corollary}\label{lemma-fractional-surface}
The function $f(\cdot) = \surface(\cdot)$ admits a fractional Helly theorem.
\end{corollary}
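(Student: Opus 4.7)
The plan is to verify that $\surface$ satisfies the hypotheses of Lemma \ref{lemma-fractional-general}, and then invoke it directly. That lemma requires four things: that $\surface$ be well-defined, continuous, strictly monotone, and that it admits both a Helly theorem and a floating body. The intermediate theorem in Section \ref{section-fractional-both} supplies the floating body for free from the first three properties together with homogeneity, so really I only need to confirm these structural properties of surface area plus the Helly theorem.

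First, I would verify that $\surface$ is well-defined in the sense of Section \ref{section-fractional-both}: clearly $\surface(\emptyset)=0$, while any unbounded convex set in $\rr^d$ is either contained in an affine subspace of dimension less than $d$ (in which case we assign surface area $0$) or has $(d-1)$-dimensional Hausdorff measure of its boundary equal to $+\infty$. Second, $\surface$ is homogeneous of degree $k=d-1$, since $\surface(\alpha K) = \alpha^{d-1}\surface(K)$ for $\alpha>0$. Third, $\surface$ is continuous on convex bodies under the Hausdorff metric; this is a classical fact that can be proved by sandwiching $K_n\to K$ between scaled copies of $K$ and using homogeneity, or via Cauchy's projection formula. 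Fourth, $\surface$ is strictly monotone: if $A\subset B$ are convex bodies whose closures differ, then a supporting hyperplane argument combined with Cauchy's formula (surface area equals a constant times the average $(d-1)$-volume of orthogonal projections) shows that some projection of $B$ strictly contains the corresponding projection of $A$, so $\surface(A)<\surface(B)$.

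Having these four properties, the theorem preceding Lemma \ref{lemma-fractional-general} gives the existence of a floating body $K(\surface,\varepsilon)$ with the required parameter $\delta(\varepsilon)\to 0$. For the Helly theorem, I invoke the observation made by the authors immediately after Lemma \ref{lemma-fractional-general}: any well-defined, continuous, homogeneous function on $\C_d$ admits a Helly theorem via the same compactness-on-the-Hausdorff-metric argument used in the last step of the floating body construction, since the family of convex bodies of surface area at most $1$ that fit inside a fixed ball is compact. Thus all hypotheses of Lemma \ref{lemma-fractional-general} hold, and we conclude that $\surface$ admits a fractional Helly theorem, with $F(\surface,d,\varepsilon) \le H(\surface,d,\delta^{-1}(\varepsilon))$.

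The main obstacle is the verification of strict monotonicity and Hausdorff continuity of $\surface$. Both are classical, but unlike the volume — where strict monotonicity is nearly trivial — for surface area one genuinely needs an argument (Cauchy's projection formula is the cleanest route). Once these are in hand, no new combinatorial or geometric input is required: the entire content of the corollary is delivered by Lemma \ref{lemma-fractional-general} and the floating body machinery already set up in the paper. Note that in contrast to the volumetric case in Corollary \ref{lemma-fractional-volume}, we obtain no explicit quantitative bound on $F(\surface,d,\varepsilon)$, precisely because the $\delta(\varepsilon)$ furnished by the abstract floating body theorem comes from a compactness argument rather than an explicit estimate.
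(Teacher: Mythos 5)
Your proposal is correct, and at the structural level it is the same as the paper's: both reduce the claim to verifying the hypotheses of Lemma~\ref{lemma-fractional-general}, treating well-definedness, homogeneity, and continuity as routine and concentrating the real work on strict monotonicity, and both then obtain the floating body and the Helly theorem for free from the preceding theorem and the compactness remark. The one genuine divergence is in the strict monotonicity argument. The paper cuts $B_0$ by a halfspace separating a point of $B_0 \setminus A$ from $A$, notes that this strictly decreases surface area, and iterates, passing to a limit to approach $A$; since the first cut already produces a strict drop, $\surface(A) \le \surface(B_1) < \surface(B_0)$. You instead invoke Cauchy's projection formula: pick $v$ with $h_A(v) < h_B(v)$, pick any $u \perp v$, and observe that the projection of $A$ onto $u^\perp$ is then strictly contained in (and, since $B$ has interior, has strictly smaller $(d-1)$-volume than) that of $B$; by continuity of projections in $u$ this holds on a set of positive measure of directions, giving a strict gap in the integral. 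Both arguments are sound. Yours is arguably cleaner and avoids the slightly informal limit step in the paper (the paper's phrase ``an arbitrarily good approximation of $A$, by a simple convexity argument'' hides a compactness/continuity step), at the cost of importing Cauchy's formula as a black box; the paper's argument is more self-contained, using only the elementary fact that shaving a cap off a convex body with a hyperplane strictly decreases surface area.
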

\begin{proof}
It is clear that the surface area function is homogeneous and well-defined.  It remains to prove that it is strictly monotone.  Suppose that $B_0$ is a convex set with $0 < \surface(B_0) < \infty$, and consider $A\subset B_0$ such that the closures of $A$ and $B_0$ are distinct.  Then, there exists a point $b$ of $B_0\setminus A$ and a halfspace $H$ such that $A\subset H$ and $b\not\in H$. Take $B_1=B_0\cap H$.  Note that the surface area of $B_1$ is strictly less than that of $B_0$.

If the closures of $B_1$ and $A$ are the same, then we are done.  Otherwise, we define $B_2$ from $B_1$ similarly.  By proceeding in this manner for $n$ steps, we attain either $B_n$ and $A$ with the same closure or else $B_n$ an arbitrarily good approximation of $A$, by a simple convexity argument.  Because $\surface(B_n)<\surface(B_{n-1})$ for every $n$, we conclude that $\surface(A)<\surface(B_0)$, as desired.
\end{proof}

By contrast, the diameter function is not strictly monotone, and actually fails to have a floating body.

\begin{lemma}
Let $S \subset \rr^d$ be a discrete set and $k$ a positive integer such that $\h_k (S) < \infty$. Then, the function $f(\cdot)$ which is the indicator of the property ``having at least $k$ points of $S$'' admits a sharp fractional Helly theorem.  Moreover, for any fixed dimension $d$, we have that $F (f, d, 0) \le \h_k (S)$.	
\end{lemma}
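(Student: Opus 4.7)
My plan is to adapt the proof of Lemma~\ref{lemma-fractional-general} to this discrete setting, replacing the floating-body argument used there by a purely combinatorial ``smallest enclosing ball'' canonical choice; since the indicator of ``having at least $k$ points of $S$'' already satisfies a sharp Helly theorem by definition of $\h_S(k)$, the conclusion of the fractional theorem will automatically be sharp.

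Set $h = \h_S(k)$. The paper's definition of a discrete set implies that $S$ is closed and locally finite in $\rr^d$, so $S$ is countable and every closed ball meets $S$ in finitely many points. I would fix a generic base point $O \in \rr^d$ for which the distances $\|p - O\|$ are pairwise distinct over $p \in S$; this is possible because $S$ is countable. For each $(h-1)$-tuple $B \subset \ff$ whose intersection contains at least $k$ points of $S$, let $r_B$ be the smallest real number for which the closed ball $D(O, r_B)$ satisfies $|D(O, r_B) \cap (\cap B) \cap S| \ge k$, and let $T_B$ be this intersection; by the choice of $O$, $|T_B| = k$ exactly.

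Given a good $h$-tuple $A \subset \ff$, every $(h-1)$-subtuple $B \subset A$ has $T_B$ defined (since $\cap B \supseteq \cap A$). I would pick the subtuple $A' \subset A$ maximizing $r_{A'}$, and let $F_0$ be the unique element of $A \setminus A'$. The central claim, mirroring Lemma~\ref{lemma-fractional-general}, is that $T_{A'} \subset F_0$. To prove it I would apply the definition of $\h_S(k)$ to the family $\mathcal{G} = A \cup \{D(O, r_{A'})\}$ of size $h+1$: the subtuple $A$ itself has intersection with $\ge k$ points of $S$ by hypothesis, and every mixed subtuple $A'_i \cup \{D(O, r_{A'})\}$ does too, since maximality of $r_{A'}$ gives $D(O, r_{A'}) \supseteq D(O, r_{A'_i})$ and hence the intersection contains $T_{A'_i}$. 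The Helly property then forces $|\cap \mathcal{G} \cap S| \ge k$; since $\cap \mathcal{G} \cap S = F_0 \cap T_{A'}$ and $|T_{A'}| = k$, this forces $T_{A'} \subset F_0$.

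A standard averaging argument on the map $A \mapsto A'$, whose codomain has size at most $\binom{n}{h-1}$, produces an $(h-1)$-tuple $M$ that is the image of at least $\alpha \binom{n}{h} / \binom{n}{h-1} = \Omega(\alpha n / h)$ good $h$-tuples. Each such $h$-tuple contributes a distinct set $F_0 \in \ff$ containing $T_M$, so $\conv(T_M)$ is a convex set with at least $k$ points of $S$ that is contained in a fraction $\beta = \Theta(\alpha/h)$ of $\ff$, which is the desired sharp fractional Helly conclusion. The step I anticipate as the main technical subtlety is ensuring that $r_B$ is well-defined when $\cap B$ is unbounded; local finiteness of $S$ resolves this, since $D(O, r) \cap S$ is finite for every $r$ regardless of the geometry of $\cap B$, and it is exactly this robustness that makes the ball-based canonical choice preferable to the halfspace-based one used in Lemma~\ref{lemma-fractional-general}.
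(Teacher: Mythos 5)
Your argument is correct and follows the same template as the paper's one-line reduction to Lemma~\ref{lemma-fractional-general}: attach to each good $(h-1)$-tuple a canonical $k$-point subset of $S$ inside its intersection, use a containment-maximality argument together with the Helly number $\h_S(k)$ to show the leftover set of a good $h$-tuple must contain that subset, and finish by averaging. The only difference is that you cut with balls about a generic basepoint $O$ rather than with halfspaces in a generic direction $v$; your variant is slightly more robust when some intersections are unbounded, though the paper's halfspace version is also easily patched by first truncating the finitely many sets of $\ff$ with a sufficiently large ball.
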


\begin{proof}
	The proof is equivalent to the one of Lemma \ref{lemma-fractional-general}.  In this case, $v$ must be chosen such that it is not orthogonal to any segment with endpoints in $S$.  Since $S$ is discrete, and thus countable, this is always possible.  In this case, following the notation above, $K_{M_0}$ is a subset of exactly $k$ points of $S$, and the same arguments follow.
\end{proof}

\section{Colorful Helly for continuous functions}\label{section-colorful-volumetric}

We show how the floating bodies defined in the previous section imply a colorful Helly theorem.  When adapted to the volume, this yields an essentially different proof of the volumetric version of Helly's theorem.  The first proof is given in \cite{DeLoera:2015wp}.

\begin{theorem}\label{theorem-vol-colorful}
	Let $f:\C_d \to \rr^+ \cup \{\infty\}$ be a well-defined, continuous, monotone function that admits a Helly theorem and such that there is a floating body for $f$.
	Let $h = H(f,d, \varepsilon)$, and $\ff_1, \ff_2, \ldots, \ff_h$ be finite families of convex sets in $\rr^d$, considered as color classes.  Suppose that the intersection of every colorful choice $F_1 \in \ff_1, \ldots, F_h \in \ff_h$ has size at least one under $f$.  Then, there is a color class $\ff_i$ for which
	\[
	f (\cap \ff_i) \ge 1-\delta(\varepsilon).
	\] 
\end{theorem}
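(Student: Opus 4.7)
The plan is to adapt the argument of Lemma \ref{lemma-fractional-general} to the colorful setting, extracting a single floating body that is simultaneously contained in every set of one color class. Fix a direction $v \in \rr^d$. For each colorful $(h-1)$-tuple $B$ — a transversal of $h-1$ of the classes, omitting some class $\ff_{i_B}$ — the hypothesis applied to $B$ together with any element of $\ff_{i_B}$ gives, via monotonicity, $f(\cap B) \ge 1$. By continuity of $f$ and the intermediate value theorem, I choose a $v$-halfspace $H_B$ such that $K_B := (\cap B) \cap H_B$ satisfies $f(K_B) = 1$.

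From this finite collection of pairs $(B, H_B)$, ranging over all colorful $(h-1)$-tuples and all possible missing colors, I select $B^*$ for which $H_{B^*}$ is containment-maximal; since all the $H_B$'s share the direction $v$, they are totally ordered by inclusion and a maximum exists. Let $i^*$ denote the color omitted by $B^*$. The aim is to prove that $K_{B^*}(f,\varepsilon) \subseteq \cap \ff_{i^*}$, which by monotonicity and the floating body property immediately yields $f(\cap \ff_{i^*}) \ge (1 - \delta(\varepsilon))\,f(K_{B^*}) = 1 - \delta(\varepsilon)$, as required.

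To establish this containment, fix an arbitrary $F \in \ff_{i^*}$ and consider the $(h+1)$-element family $A \cup \{H_{B^*}\}$, where $A := B^* \cup \{F\}$ is a colorful $h$-tuple. I would check that every $h$-subfamily has $f$ of intersection at least $1$: the subfamily $A$ itself satisfies this by the colorful hypothesis, and any subfamily of the form $(A \setminus \{F_j\}) \cup \{H_{B^*}\}$ because $A_{-j} := A \setminus \{F_j\}$ is another colorful $(h-1)$-tuple (possibly omitting a color different from $i^*$) and the containment-maximality gives $H_{B^*} \supseteq H_{A_{-j}}$, whence $(\cap A_{-j}) \cap H_{B^*} \supseteq K_{A_{-j}}$ and has $f$-value at least $1$. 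Applying the Helly theorem to $A \cup \{H_{B^*}\}$ then yields $f(K_{B^*} \cap F) \ge 1 - \varepsilon$, and the floating body property forces $K_{B^*}(f,\varepsilon) \subseteq F$; since $F \in \ff_{i^*}$ was arbitrary, the inclusion follows.

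The delicate point is that the containment-maximality must be taken globally, across all color-omission patterns, not merely within a single one: otherwise, for the subfamilies $A_{-j}$ with $j \ne i^*$, whose missing color differs from $i^*$, there would be no reason for $H_{B^*}$ to dominate $H_{A_{-j}}$. Using a single fixed direction $v$ for every $H_B$ is precisely what makes this global comparison well-defined, as halfspaces of differing directions are not comparable by inclusion. A minor technicality is that $H_B$ need not be uniquely determined without strict monotonicity; any choice yielding $f(K_B) = 1$ suffices, and such a choice exists by continuity of $f$.
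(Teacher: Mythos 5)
Your proof is correct and follows essentially the same approach as the paper's: fix a direction $v$, define for each colorful $(h-1)$-tuple $B$ a $v$-halfspace $H_B$ with $f(K_B)=1$, select a globally containment-maximal $B^*$, and then combine the Helly theorem with the floating-body property to show $K_{B^*}(f,\varepsilon)\subseteq F$ for every $F$ in the omitted color class. The only small omission is the explicit reduction to bounded sets (which the paper states at the outset) needed to justify the intermediate-value argument producing each $H_B$; otherwise your observation about why the maximality must be taken over \emph{all} colorful $(h-1)$-tuples, not just those omitting a fixed color, correctly pinpoints the crux of the argument.
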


\begin{proof}
	We follow the same technique as in the proof of Lemma \ref{lemma-fractional-general}.  For a convex set $K$, we refer to $f(K)$ as its \emph{size}.   We may assume without loss of generality that the sets in $\ff_1, \ldots, \ff_h$ are bounded.
	
	Consider $v$ a direction.  We consider a $v$-halfspace to be a set of the form $\{x: \langle x, v\rangle \le \alpha\}$ for some real $\alpha$.  For each colorful $(h-1)$-tuple $B=\{F_1, F_2, \ldots, F_{h-1}\}$ (i.e. each $F_i$ is in a different color class) we have $f(\cap B) \ge 1$.  Let $H_B$ be the $v$-halfspace such that $f((\cap B)\bigcap H_B)=1$.  We denote this intersection by $K_B$.
	
	Now consider a colorful $h$-tuple $A$ of $\ff$.  We know that $f(\cap A)\ge1$.  Among its $(h-1)$-tuples, there must be one, call it $A'$, such that $H_{A'}$ is containment-maximal.  It is clear that if we add $H_{A'}$ to $A$, in the resulting set the intersection of any $h$ sets has size at least one.  By the definition of $h$ we have that the intersection of this whole family has size at least $1-\varepsilon$.  However, this implies that the set not in $A'$ contains $K_{A'}(f,\varepsilon)$.
	
	Now let $B$ be a colorful $(h-1)$-tuple with containment-maximal $H_B$ over all possible colorful $(h-1)$-tuples.  Let $\ff_i$ be the color class that does not have a set in $B$.  The observations above imply that
	\[
	K_B(f, \varepsilon) \subset \bigcap \ff_i,
	\]
	finishing the proof.
\end{proof}

\begin{corollary}
	Let $h = H(\vol,d, \varepsilon)$, and $\ff_1, \ff_2, \ldots, \ff_h$ be finite families of convex sets in $\rr^d$, considered as color classes.  Suppose that the intersection of every colorful choice $F_1 \in \ff_1, \ldots, F_h \in \ff_h$ has volume at least one.  Then, there is a color class $\ff_i$ for which
	\[
	\vol (\cap \ff_i) \ge 1-c\varepsilon^{2/(d+1)}
	\] 
	for some constant $c$ depending only on the dimension.
\end{corollary}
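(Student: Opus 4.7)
The plan is to derive this corollary as an immediate application of Theorem \ref{theorem-vol-colorful} to the special case $f = \vol$. The only real work is verifying the hypotheses and tracking the explicit form of the function $\delta(\varepsilon)$ coming from the floating body for the volume.

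First I would check that $f=\vol$ meets the assumptions of Theorem \ref{theorem-vol-colorful}. Volume is monotone under inclusion, continuous under the Hausdorff metric on compact convex sets, and well-defined in the sense of the paper, since $\vol(\emptyset)=0$ and an unbounded convex set has volume either $0$ (if contained in a lower-dimensional affine flat) or $\infty$. The existence of a Helly theorem for the volume with quantitative bound $H(\vol,d,\varepsilon)=\Theta(\varepsilon^{-(d-1)/2})$ was already recorded in the introduction, following De Loera et al. It remains to exhibit a floating body, but this is precisely the classical convex floating body $K(\vol,\varepsilon)$ recalled at the start of Section \ref{section-fractional-both}: the intersection, over all closed halfspaces $H$ with $\vol(H\cap K)\ge (1-\varepsilon)\vol(K)$, of $H$ itself. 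The cut-off property in Definition \ref{definition-floating-body} follows because any convex $A$ with $\vol(A\cap K)\ge(1-\varepsilon)\vol(K)$ must contain every such halfspace intersection (otherwise a separating halfspace would cut off too much volume).

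Next I would identify $\delta(\varepsilon)$ for the volume. The Buchta--Gruber--M\"uller estimate cited in the text gives
\[
\vol\bigl(K(\vol,\varepsilon)\bigr)\;\ge\;\bigl(1-c\,\varepsilon^{2/(d+1)}\bigr)\vol(K)
\]
for sufficiently smooth $K$, with $c$ depending only on $d$, and standard Hausdorff approximation of general convex bodies by smooth ones extends the inequality to all convex sets of finite volume. Thus we may take $\delta(\varepsilon)=c\,\varepsilon^{2/(d+1)}$ in the floating-body inequality for $f=\vol$, and the condition $\delta(\varepsilon)\to 0$ as $\varepsilon\to 0$ is clear.

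With these pieces in hand, the conclusion is immediate: Theorem \ref{theorem-vol-colorful} applied to $f=\vol$ produces, under the colorful hypothesis on $\ff_1,\ldots,\ff_h$ with $h=H(\vol,d,\varepsilon)$, a color class $\ff_i$ satisfying $\vol(\cap\ff_i)\ge 1-\delta(\varepsilon)$, and substituting the explicit $\delta(\varepsilon)=c\,\varepsilon^{2/(d+1)}$ yields the stated bound. The only delicate point is the smooth-to-general approximation step used to justify the $\varepsilon^{2/(d+1)}$ rate for arbitrary convex bodies, but this is already asserted in the discussion preceding Definition \ref{definition-floating-body}, so no new work is needed; the rest is a direct specialization.
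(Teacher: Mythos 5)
Your proof is correct and takes exactly the route the paper intends: specialize Theorem \ref{theorem-vol-colorful} to $f=\vol$, verify the hypotheses (monotone, continuous, well-defined, quantitative Helly theorem of De Loera et al., and the Sch\"utt--Werner convex floating body), and read off $\delta(\varepsilon)=c\,\varepsilon^{2/(d+1)}$ from the Buchta--Gruber--M\"uller estimate extended to general bodies by approximation. The one detail worth flagging is that your description of $K(\vol,\varepsilon)$ as the intersection of all closed halfspaces $H$ with $\vol(H\cap K)\ge(1-\varepsilon)\vol(K)$ is the correct (and standard) formulation needed for the cut-off property; the paper's in-line phrasing of this set is stated slightly differently, and your version is the one that actually makes the separation argument go through.
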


\section{Remarks}\label{section-remarks}

It is unclear if the constants needed for the volumetric Helly theorem, the colorful volumetric Helly theorem and the fractional volumetric Helly theorem should be different or not.  In particular

\begin{problem}
	Is it true that $F(\vol, d, \varepsilon) > H(\vol, d, \varepsilon)$?	
\end{problem}

The known results where the fractional Helly number is different from the Helly number all require checking smaller subfamilies for the fractional version.  It would be interesting to have fractional Helly results which require stronger conditions than their Helly counterpart.

As far as the authors know, there are no examples where a colorful Helly theorem requires larger family sizes than its monochromatic counterpart.  There are currently two different proofs of the colorful volumetric Helly theorem which require $O(\varepsilon^{-(d^2-1)/4})$ color classes, as opposed to the $\Theta (\varepsilon^{-(d-1)/2})$ for the monochromatic version.

\begin{problem}
	For the colorful volumetric Helly theorem, are $O(\varepsilon^{-(d-1)/2})$ color classes sufficient?	
\end{problem}

\section{Acknowledgments}

The authors would like to thank Jes\'us De Loera for stimulating discussions on the topic.  We would also like to thank the University of Michigan, Northeastern University, and the Massachusetts Institute of Technology for providing space and resources to carry out this research.  D.R.~was supported by the National Science Foundation Graduate Research Fellowship under Grant No.~1122374. 

\bibliographystyle{amsalpha}

\bibliography{references.bib}

\noindent David Rolnick \\
\textsc{
Mathematics Department \\
Massachusetts Institute of Technology \\
Cambridge, MA 02139
}\\[0.1cm]

\noindent Pablo Sober\'on \\
\textsc{
Mathematics Department \\
Northeastern University \\
Boston, MA 02445
}\\[0.1cm]

\noindent \textit{E-mail addresses: }\texttt{drolnick@math.mit.edu, p.soberonbravo@neu.edu}

\end{document}